\documentclass[12pt]{amsart}
\usepackage[utf8]{inputenc}
\usepackage[all]{xy}
\usepackage{amsmath, amsfonts, amssymb, amscd, amsthm}
\usepackage[margin=27mm,top=30mm]{geometry} 
\usepackage{graphicx}
\usepackage[colorinlistoftodos]{todonotes}
\usepackage[colorlinks=true, allcolors=blue]{hyperref}
\usepackage[tikz]{}
\usepackage{enumerate}
\usepackage{float}
\usepackage{textcomp}
\usepackage{color}
\usepackage{mathrsfs}

\newtheorem{theorem}{Theorem}[section]
\newtheorem{lemma}[theorem]{Lemma}
\newtheorem{corollary}[theorem]{Corollary}
\newtheorem{proposition}[theorem]{Proposition}
\newtheorem{conjecture}[theorem]{Conjecture}
\theoremstyle{definition}
\newtheorem{example}[theorem]{Example}
\newtheorem{definition}[theorem]{Definition}
\newtheorem{remark}[theorem]{Remark}

\setlength{\parindent}{0mm}
\setlength{\parskip}{2mm}

\newcommand{\forget}{\chi}
\newcommand{\ho}{Hom}

\DeclareMathOperator*{\mo}{mod}

\DeclareMathOperator*{\en}{End}
\DeclareMathOperator*{\si}{sign}

\numberwithin{equation}{section}

\title{The Coxeter Transformation on Cominuscule Posets}
\author{Emine Y\i ld\i r\i m}
\address{Department of Mathematics, Université du Québec à Montréal, Montréal, QC, H2X 3Y7.}
\email{yildirim.emine@courrier.uqam.ca }

\begin{document}

\begin{abstract}
Let $\mathsf{J(C)}$ be the poset of order ideals of a cominuscule poset $\mathsf{C}$ where $\mathsf{C}$ comes from two of the three infinite families of cominuscule posets or the exceptional cases. We show that the Auslander-Reiten translation $\tau$ on the Grothendieck group of the bounded derived category for the incidence algebra of the poset $\mathsf{J(C)}$, which is called the \emph{Coxeter transformation} in this context, has finite order. Specifically, we show that $\tau^{h+1}=\pm id$ where $h$ is the Coxeter number for the relevant root system.
\end{abstract}

\maketitle

\section*{Introduction}

Let $A$ be the incidence algebra of a poset $\mathsf{P}$ over a base field $\Bbbk$. If the poset $\mathsf{P}$ is finite, then $A$ is a finite dimensional algebra with finite global dimension. We are interested in incidence algebras coming from cominuscule posets. A cominuscule poset can be thought of as a parabolic analogue of the poset of positive roots of a finite root system. Cominuscule posets (also called minuscule posets) appear in the study of representation theory and algebraic geometry, especially in Lie theory and Schubert calculus~\cite{BL00}, \cite{Green13}. 

Let $\mathsf{J(C)}$ be the poset of order ideals of a given cominuscule poset $\mathsf{C}$. The poset $\mathsf{J(C)}$ is an interesting object in its own right. For instance, there is a correspondence between the elements of $\mathsf{J(C)}$ and the minimal coset representatives of the corresponding Weyl group~\cite{BL00}, \cite{RS13}. Many combinatorial properties of order ideals of cominuscule posets are explained by Thomas and Yong~\cite{TY09}. However, our main motivation for this paper comes from a conjecture by Chapoton which can be stated as follows.

Let $\mathsf{J(R)}$ be the poset of order ideals of $\mathsf{R}$ where $\mathsf{R}$ is the poset of positive roots of a finite root system $\Phi$. Let $H$ be a hereditary algebra of type $\Phi$ and let $\mathcal{T}_H$ be the poset of torsion classes. Consider the incidence algebras of $\mathsf{J(R)}$ and $\mathcal{T}_H$. Chapoton conjectures that there is a triangulated equivalence between the bounded derived categories $\mathcal{D}^b(\mo\mathsf{J(R)})$ and $\mathcal{D}^b(\mo\mathcal{T}_H)$ and that $\mathcal{D}^b(\mo\mathsf{J(R)})$ is fractionally Calabi-Yau, i.e. some non-zero power of the Auslander-Reiten translation $\tau$ equals some power of the shift functor. 

In~\cite{Chapoton07}, Chapoton proved that the Auslander-Reiten translation $\tau$ on the Grothendieck group of the bounded derived category (which is called \emph{Coxeter transformation} in this context) for Tamari posets is periodic. There is an abundance of studies on Coxeter transformation in the literature. We refer to Ladkani~\cite{L08} for references on the topic and recent results on the periodicity of Coxeter transformation. Kussin, Lenzing, Meltzer~\cite{KLM13} showed that the bounded derived category is fractionally Calabi-Yau for certain posets by using singularity theory. Recently, Diveris, Purin, Webb~\cite{DPP17} introduced a new method to determine whether the bounded derived category of a poset is fractionally Calabi-Yau.

In this paper, we consider Chapoton's Conjecture on the level of Grothendieck groups for cominuscule posets instead of root posets. We calculate the action of Auslander-Reiten translation $\tau$ on the bounded derived category $\mathcal{D}^b(\mo\mathsf{J(C)})$ of the incidence algebra of the poset $\mathsf{J(C)}$ of order ideals of a cominuscule poset $\mathsf{C}$, and then we write the action on the Grothendieck group. We show that the Auslander-Reiten translation $\tau$ acting on the corresponding Grothendieck group has finite order for two of the three infinite families of cominuscule posets, and for the exceptional cases. One can do this by considering the action of $\tau$ on any convenient set of generators. The obvious choices are the set of the isomorphism classes of simple modules, and the set of the isomorphism classes of projective modules. However, the periodicity of $\tau$ is not evident on these sets of generators. Instead, we consider a special spanning set of the Grothendieck group on which we see the periodicity combinatorially. 

The plan of the paper is as follows. In the first section, we give necessary background material we use throughout the paper. In the second section, we introduce a special collection of projective resolutions for \emph{grid posets}, and the corresponding combinatorics for the homology of these projective resolutions. These projective resolutions provide a spanning set for the Grothendieck group, and in the same section we show that the Coxeter transformation $\tau$ permutes this collection. The third section is devoted to setting up the combinatorial tools which we will use to prove the periodicity of $\tau$. In the fourth section, we prove the main result for grid posets. Finally, in the last section we extend our result to some of the cominuscule cases.

\subsection*{Acknowledgments} The author would like to thank her supervisor Hugh Thomas for his generous support and for his careful reading of this paper. The author was partially supported by ISM scholarships. The author also thanks Ralf Schiffler, Peter Webb and Nathan Williams for inspiring discussions. Finally, the author would like to thank the referee for pointing out Ladkani's \emph{flip-flop} technique (see Remark~\ref{referee}), and their careful reading of the paper.

\section{Preliminaries}

In this preliminary section, we will recall some basic definitions to fix notation in the paper.

\subsection{Basic Definitions}

We will use $\mathsf{P}$ to denote a poset. Two elements $a$, $b$ are comparable in $\mathsf{P}$ if $a\leq b$ or $b\leq a$, otherwise we say they are incomparable. We say $b$ covers $a$ in $\mathsf{P}$ if $a< b$ and there is no element $c$ such that $a< c< b$. An interval $[a,b]$ in $\mathsf{P}$ consists of all elements $x$ such that $a\leq x\leq b$. A poset $\mathsf{P}$ is called locally finite when every interval in $\mathsf{P}$ is finite. When $\mathsf{P}$ is locally finite, we will represent it by its \emph{Hasse diagram}, i.e. we represent every element in the poset $\mathsf{P}$ by a vertex and we put an arrow from a vertex $b$ to a vertex $a$ if $b$ covers $a$. We use the convention that the arrows in the Hasse diagram go downwards even though we simply draw edges in the figures.

A chain is a subset of $\mathsf{P}$ in which every pair of elements is comparable. An antichain is a subset of $\mathsf{P}$ such that every pair of different elements is incomparable.

\begin{definition} A grid poset $\mathsf{P_{m,n}}$ is the product of two chains of length $m$ and of length $n$. Explicitly, the elements of $\mathsf{P_{m,n}}$ are the pairs $(i,j)$ with $i\in \{1,\cdots,m\}$ and $j\in \{1,\cdots,n\}$. We compare elements \emph{entry-wise} in $\mathsf{P_{m,n}}$, i.e $(a,b)\leq (c,d)$ when $a\leq c$ and $b\leq d$. 
\end{definition}

A lattice $\mathsf{L}$ is a poset such that every pair of elements in $\mathsf{L}$ has a unique least upper bound, also called the \emph{join}, and a unique greatest lower bound, also called the \emph{meet}. A lattice is  distributive if the join and meet operations distribute over each other.

An \emph{order ideal} in $\mathsf{P}$ is a subset $\mathsf{I}\subseteq\mathsf{P}$ such that if $b\in \mathsf{I}$ and $a\leq b$, then $a\in \mathsf{I}$. We write $\mathsf{J(P)}$ for the set of all order ideals of $\mathsf{P}$. The partial order relation on $\mathsf{J(P)}$ is given by containment. Note that $\mathsf{J(P)}$ is always a distributive lattice. Recall that the fundamental theorem for finite distributive lattices states that if $\mathsf{L}$ is a finite distributive lattice, then there is a unique finite poset $\mathsf{P}$ such that $\mathsf{L}\cong \mathsf{J(P)}$ \cite[Theorem 3.4.1]{Stanley12}.

A non-decreasing finite sequence of positive integers $(a_1,\ldots,a_m)$ is called \emph{a partition}. We are going to represent such sequences by $\alpha=(\lambda_1^{\alpha_1},\cdots,\lambda_r^{\alpha_r})$ where $0\leq\lambda_1<\cdots<\lambda_r\leq n$ denote the distinct integers in the partition $\alpha$ while $\alpha_i$'s denote the number of repetitions of each $\lambda_i$.   

For each $k\in \{1,\ldots,m\}$, by the \emph{$k$-th row} of $\mathsf{P_{m,n}}$ we mean elements of the form $(k,b)$ in $\mathsf{P_{m,n}}$. We represent an order ideal $\mathsf{I}$ in $\mathsf{J(P_{m,n})}$ by its corresponding partition, i.e. the list of the number of elements of $\mathsf{I}$ in each row from top to bottom.

\begin{example}\label{poset1} Here is an example of a grid poset and its poset of order ideals with the corresponding partitions:
\begin{figure}[H]
\includegraphics[width=0.5\textwidth]{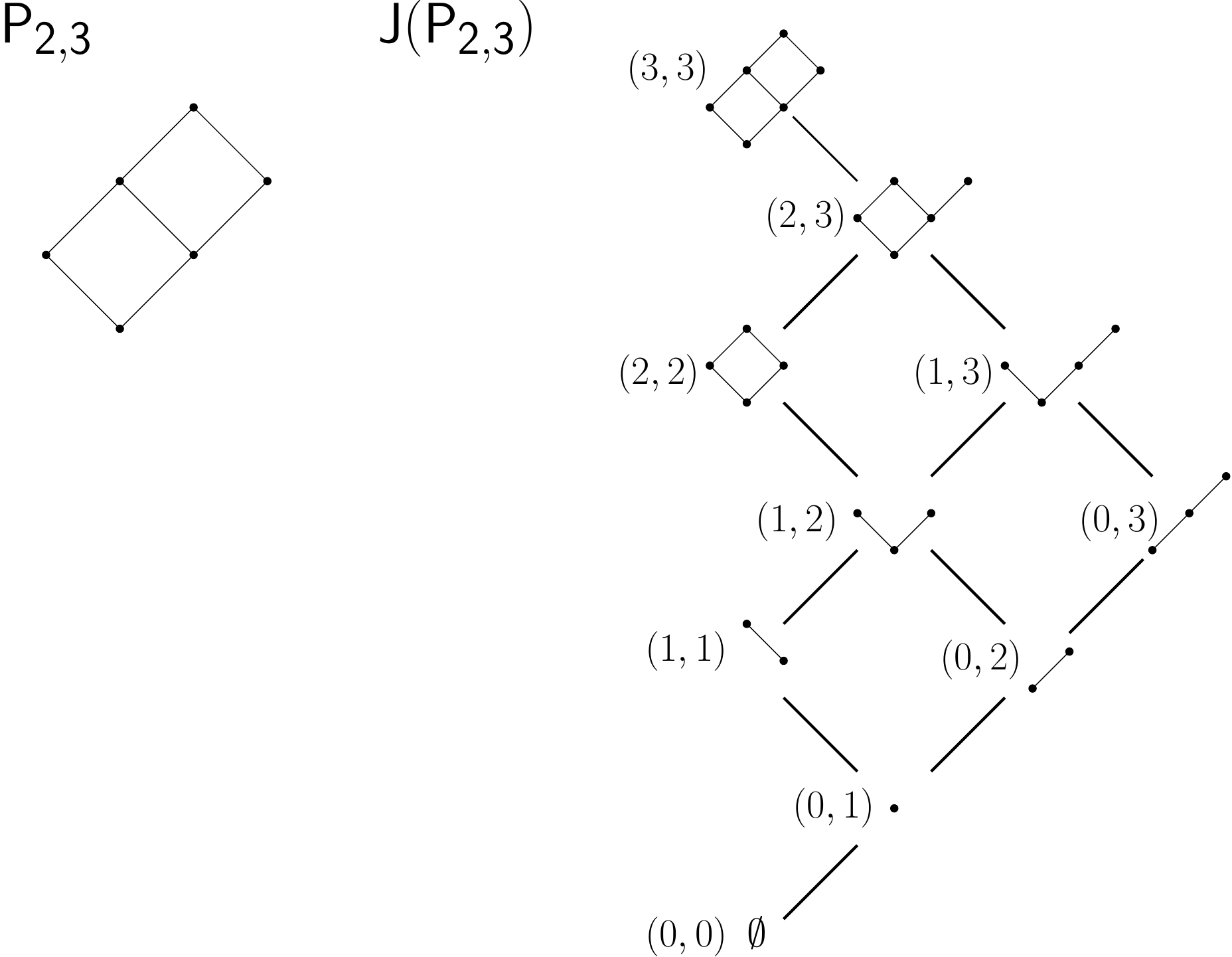}
\caption{The Hasse diagrams of the grid poset $\mathsf{P_{2,3}}$ and the poset of order ideals $\mathsf{J(P_{2,3})}.$}
\end{figure}
For instance, take the order ideal $\mathsf{I}$ represented by the partition $(2,3)$ in $\mathsf{J(P_{2,3})}$. The order ideal $\mathsf{I}$ consists of two rows: there are two elements in the first row, and we have three elements in the second row.
\end{example}

\subsection{Incidence algebra of $\mathsf{J(P_{m,n})}$} 

For a given locally finite poset $\mathsf{P}$, we define the incidence algebra of $\mathsf{P}$ as the path algebra of the Hasse diagram of $\mathsf{P}$ modulo the relation that any two paths are equal if their starting and ending points are the same. For two paths $p$ and $q$, we write the product as $pq$ if the starting point of $q$ equals to the ending point of $p$. Recall that we draw edges in the Hasse diagram oriented downwards.

Let $\mathcal{A}$ be the incidence algebra of $\mathsf{J(P_{m,n})}$. The algebra $\mathcal{A}$ has a finite global dimension since we do not have any oriented cycles in the quiver of $\mathsf{J(P_{m,n})}$. We will use $\mo\mathcal{A}$ to denote the category of finitely generated right modules over $\mathcal{A}$. Let us use $x_{\alpha\geq\beta}$ to denote the unique path that starts at vertex $\alpha$ and ends at vertex $\beta$. Let $P_{\alpha}=x_{\alpha\geq\alpha}\mathcal{A}$ be the indecomposable projective module over the algebra $\mathcal{A}$ for a vertex $\alpha$ in $\mathsf{J(P_{m,n})}$. The elements $x_{\alpha\geq\beta}$ form a $\Bbbk$-basis for $P_{\alpha}$. Then, the morphisms between two indecomposable projective modules $P_{\alpha}$ and $P_{\gamma}$ are \[\ho_{\mathcal{A}}(P_{\alpha},P_{\gamma})=\ho_{\mathcal{A}}(x_{\alpha\geq\alpha}\mathcal{A},x_{\gamma\geq\gamma}\mathcal{A})=x_{\gamma\geq\gamma}\mathcal{A}x_{\alpha\geq\alpha}\]
This is one dimensional if $\alpha\leq\gamma$, and $0$ otherwise. Similarly, we let $I_{\alpha}$ be an indecomposable injective module and $S_{\alpha}$ be a simple module corresponding to the vertex $\alpha$. For a more detailed description of indecomposable projective, injective and simple modules over a quotient of a path algebra, see \cite[Chapter 3]{ASS06}. 

\subsection{Derived categories}

Throughout this article, we will use the cohomological convention for complexes: all differentials increase the degree by one and we use superscripts to denote the degree at which the module is placed in the complex.

Let $\mathcal{B}$ be a finite-dimensional algebra. For a module $M$ from $\mo\mathcal{B}$, by the \emph{stalk complex of} $M$ we mean the complex which consists of just $M$ at one degree and $0$ everywhere else.

For a module $M$ from $\mo\mathcal{B}$, by a \emph{resolution} of $M$ we mean a complex $\mathcal{X} = (X^i)_{i\in\mathbb{Z}}$ where $X^i=0$ for all $i>0$ or for all $i<0$ with the property that the homology $H^0(\mathcal{X})$ is $M$ while $H^n(\mathcal{X})$ is zero for all $n\neq 0$. If $\mathcal{X}$ is bounded from above and each $X^i$ is projective, then $\mathcal{X}$ is called a \emph{projective resolution}. Similarly, if $\mathcal{X}$ is bounded from below and each $X^i$ is injective, then $\mathcal{X}$ is called an \emph{injective resolution}.

We will use $\mathcal{D}^b(\mo\mathcal{B})$ to denote the derived category of bounded complexes of $\mathcal{B}$-modules, and we simply write $\mathcal{D}^b(\mathcal{B})$.

\subsection{The Grothendieck Group} Our main reference for this subsection is~\cite[Chapter 3, Section 1]{Happel88}. 

The Grothendieck group $\mathcal{K}_0(\mathcal{B})$ of a finite-dimensional algebra $\mathcal{B}$ is an abelian group defined as a quotient of the free abelian group generated by isomorphism classes of objects in $\mo \mathcal{B}$ divided by the subgroup generated by the elements $[X]-[Y]+[Z]$ for every exact sequence $0\to X\to Y\to Z\to 0\text{ in } \mo \mathcal{B}.$ 

The Grothendieck group $\mathcal{K}_0(\mathcal{B})$ has a basis which consists of a representative of the isomorphism classes of each simple module. When $\mathcal{B}$ has finite global dimension, $\mathcal{K}_0(\mathcal{B})$ has also a basis coming from the representatives of the isomorphism classes of each indecomposable projective module~\cite[Chapter 3, Section 1.3]{Happel88}.

The Grothendieck group $\mathcal{K}_0$ of a triangulated category $\mathcal{C}$ is defined as a quotient of the free abelian group on the isomorphism classes of objects in $\mathcal{C}$ divided by the subgroup generated by the elements $[X]-[Y]+[Z]$ for every triangle $X\rightarrow Y\rightarrow Z\rightarrow X[1]$ of $\mathcal{C}$.

Let $\mathcal{K}_0(\mathcal{D}^b(\mathcal{B}))$ be the Grothendieck group of the bounded derived category of $\mo\mathcal{B}$ which is a triangulated category. There is an isomorphism from $\mathcal{K}_0(\mathcal{D}^b(\mathcal{B}))$ to $\mathcal{K}_0(\mathcal{B})$ given by the Euler characteristic of a complex~\cite[Chapter 3, Section 1.2]{Happel88}. 

\subsection{The Auslander-Reiten Translation} Given an object $X$ in $\mathcal{D}^b(\mathcal{B})$, the Auslander-Reiten translation $\tau(X)$ is defined as \[\tau(X)=(X\overset{L}{\otimes}_{\mathcal{B}}D\mathcal{B})[-1]\] where $D$ is the $\Bbbk$-duality functor, $[-1]$ is the shift functor, and $-\overset{L}{\otimes}_{\mathcal{B}}D\mathcal{B}$ is the left derived functor of the tensor product with $D\mathcal{B}$ over the algebra $\mathcal{B}$~\cite[Section 3.1]{Keller08}.

The functor $-\overset{L}{\otimes}_{\mathcal{B}}D\mathcal{B}$ has an easy description on the category of indecomposable projective modules of the algebra $\mathcal{B}$: it replaces a projective module $P_{\alpha}$ at a vertex $\alpha$ with the corresponding injective module $I_{\alpha}$ at the same vertex $\alpha$~\cite[Section 3.6]{Happel87}.

We calculate the Auslander-Reiten translation $\tau$ for a stalk complex $\mathcal{X}$ as follows: we replace $\mathcal{X}$ by its projective resolution $\mathcal{P}$ and then apply the functor $-\otimes_{\mathcal{B}}D\mathcal{B}$ to each term of $\mathcal{P}$, and finally shift the resulting injective complex by one to the right.

Since $\tau$ is an auto-equivalence of the triangulated category, after applying $\mathcal{K}_0$ we get an automorphism of the corresponding Grothendieck groups. Thus, $\tau$ defines a bijective linear map on $\mathcal{K}_0(\mathcal{B})$~\cite[Chapter 3, Section 1]{Happel88}.  This map is also called the Coxeter transformation. 

\begin{example} We continue in the setting of Example \ref{poset1}. We take the incidence algebra $\mathcal{A}$ of the poset $\mathsf{J(P_{2,3})}$. Let $S_{(1,2)}$ be the simple module over the vertex $(1,2)$. Then, consider the complex $S: 0\rightarrow S^0_{(1,2)}\rightarrow 0$ in  $\mathcal{D}^b(\mathcal{A})$. Now, we calculate $\tau S$ as follows:
\begin{enumerate}
\item Replace $S$ by its projective resolution $$P: 0\rightarrow P^{-2}_{(0,1)}\rightarrow P^{-1}_{(1,1)}\oplus P^{-1}_{(0,2)}\rightarrow P^0_{(1,2)}\rightarrow 0$$
\item Apply the functor $-\otimes_{\mathcal{A}}D\mathcal{A}$ to each term of the projective resolution $P$, then each projective module is replaced by the corresponding injective module $$I: 0\rightarrow I^{-2}_{(0,1)}\rightarrow I^{-1}_{(1,1)}\oplus I^{-1}_{(0,2)}\rightarrow I^0_{(1,2)}\rightarrow 0$$
Here the complex $I$ is quasi-isomorphic to the complex $0\rightarrow S^{-2}_{(0,1)}\rightarrow 0$.
\item Finally we apply the shift and get $$S': 0\rightarrow S^{-1}_{(0,1)}\rightarrow 0.$$
\end{enumerate}
Thus, $\tau S= S'$. On the level of Grothendieck group we have $\tau ([S_{(1,2)}])=-[S_{(0,1)}]$.
\end{example}

\section{Modules and Intervals}

\subsection{Projective resolutions}\label{s-proj-res} In this subsection, we will describe a special collection of projective resolutions in $\mathcal{D}^b(\mo\mathcal{A})$ that will span the Grothendieck group. In order to prove the periodicity of $\tau$, we are going to need these resolutions.

\begin{definition} We call a tuple of the following form an \emph{enhanced partition}: $$(0^{\alpha_0}|\lambda_1^{\alpha_1},\cdots,\lambda_r^{\alpha_r}|n^{\alpha_{r+1}})$$ where $0\leq\lambda_1<\lambda_2<\cdots<\lambda_r\leq n$ and we will refer to the $0$'s and the $n$'s separated by bars as fixed entries. Note that $\alpha_0$ and $\alpha_{r+1}$ can be zero. 
\end{definition} 
Let $E$ be the set of enhanced partitions, and let $F$ be the set of partitions. We now define a function $\forget$ from $E$ to $F$, which sends an enhanced partition to a partition by forgetting the bars. This means there are no fixed entries anymore. Formally,
\begin{equation}\label{forget}
\forget[(0^{\alpha_0}|\lambda_1^{\alpha_1},\cdots,\lambda_r^{\alpha_r}|n^{\alpha_{r+1}})]=(0^{\alpha_0},\lambda_1^{\alpha_1},\cdots,\lambda_r^{\alpha_r},n^{\alpha_{r+1}})
\end{equation}
The function $\forget$ allows us to treat enhanced partitions as usual partitions. We need enhanced partitions to write a special collection of resolutions.  

Let $E_{L}$ be the set of enhanced partitions of the form $\alpha=(0^{\alpha_0}|\lambda_1^{\alpha_1},\lambda_2^{\alpha_2},\cdots,\lambda_r^{\alpha_r}|n^{\alpha_{r+1}})$ where $\lambda_1\neq 0$ and $E_{R}$ be to the set of enhanced partitions of the form $\alpha=(0^{\alpha_0}|\lambda_1^{\alpha_1},\lambda_2^{\alpha_2},\cdots,\lambda_r^{\alpha_r}|n^{\alpha_{r+1}})$ where $\lambda_r\neq n$.

\begin{definition} For a given enhanced partition $\alpha=(0^{\alpha_0}|\lambda_1^{\alpha_1},\cdots,\lambda_r^{\alpha_r}|n^{\alpha_{r+1}})$, let $R_{\alpha}$ be the set of indices of the nonzero entries $\lambda_i$ in $\alpha$. 
\end{definition}

If $\alpha\in E_L$, then $R_{\alpha}=\{1,2,\cdots,r\}$, and if $\alpha\notin E_L$, then $R_{\alpha}=\{2,\cdots,r\}$. 

Let $\delta_i$ be the sequence of $0$'s and $1$'s where $1$ is placed at those places $\lambda_i$'s appear in $\alpha$. Denote also $\delta_X=\sum_{i\in X}\delta_i$ for a subset $X\subseteq R_{\alpha}$. Notice that we define $\delta_i$'s only for non-fixed and nonzero entries in $\alpha.$ 

\begin{definition} For an enhanced partition $\alpha=(0^{\alpha_0}|\lambda_1^{\alpha_1},\cdots,\lambda_r^{\alpha_r}|n^{\alpha_{r+1}})\in E_L$, we define a complex of projective modules $\mathcal{P}_{\alpha}$ as follows: 

\begin{equation}~\label{Palpha}
\mathcal{P}_{\alpha}:0\rightarrow P^{-r}_{\alpha-\delta_{R_{\alpha}}}\xrightarrow{\partial^{-r}} \bigoplus_{\substack{J\subseteq R_{\alpha},\\|J|=r-1}} P^{-r+1}_{\alpha-\delta_J}\rightarrow \cdots\rightarrow \bigoplus_{\substack{J\subseteq R_{\alpha},\\|J|=1}} P^{-1}_{\alpha-\delta_J}\xrightarrow{\partial^{-1}} P^0_{\alpha}\rightarrow 0
\end{equation} with the maps 

$$\partial^{-k}: \bigoplus_{\substack{J\subseteq R_{\alpha},\\|J|=k}} P_{\alpha-\delta_J}\to \bigoplus_{\substack{J\subseteq R_{\alpha},\\|J|=k-1}} P_{\alpha-\delta_J}, \qquad x_{(\alpha-\delta_{\{i_1,\cdots,i_k\}}\geq\beta)}\mapsto \sum_t(-1)^tx_{(\alpha-\delta_{\{i_1,\cdots,\widehat{i_t},\cdots,i_k\}}\geq\beta)}$$ for each $J=\{i_1,\cdots,i_k\}\subseteq R_{\alpha}$ and $x_{\alpha-\delta_J\geq\beta}\in P_{\alpha-\delta_J}.$ 
\end{definition}

\begin{remark} The grading of the complex $\mathcal{P}_{\alpha}$ comes from the cardinality of $J\subseteq R_{\alpha}$, and $\alpha-\delta_J$ is just a vector subtraction.
\end{remark}

\begin{proposition}\label{pr} The complex $\mathcal{P}_{\alpha}$ in Equation~\eqref{Palpha} defines a projective resolution.
\end{proposition}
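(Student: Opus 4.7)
The plan is to verify three things in turn: that each term is projective, that $\partial^2=0$, and that the complex is exact in every strictly negative cohomological degree. The first is immediate from the construction, and boundedness is likewise clear since $\mathcal{P}_\alpha$ lives in degrees $-r,\dots,0$. The relation $\partial^{-k+1}\circ\partial^{-k}=0$ is the standard Koszul cancellation: the alternating-sign formula for $\partial$ is exactly the simplicial boundary map on subsets of $R_\alpha$, so each doubly-removed face $J\setminus\{i_s,i_t\}$ appears twice with opposite signs. The only substantive content is therefore the exactness of $\mathcal{P}_\alpha$ in strictly negative degrees.

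To establish this, I would decompose $\mathcal{P}_\alpha$ pointwise over the vertices $\beta$ of $\mathsf{J(P_{m,n})}$. Since $P_\gamma$ has the $\Bbbk$-basis $\{x_{\gamma\geq\beta}:\beta\leq\gamma\}$, the $\beta$-component of $P_{\alpha-\delta_J}$ is the one-dimensional space spanned by $x_{\alpha-\delta_J\geq\beta}$ when $\beta\leq\alpha-\delta_J$ and is zero otherwise. The differential preserves these $\beta$-isotypic pieces and acts on them by the alternating sum of face inclusions, so the restriction of $\mathcal{P}_\alpha$ to the $\beta$-component is identified with the augmented simplicial chain complex of
\[
\Delta(\beta)=\{J\subseteq R_\alpha : \beta\leq\alpha-\delta_J\},
\]
with a $k$-subset placed in cohomological degree $-k$.

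The core combinatorial step is to identify $\Delta(\beta)$ explicitly. Setting $B_i(\beta)=\max\{\beta_k : \alpha_k=\lambda_i\}$ for each $i\in R_\alpha$, a direct coordinate-wise check shows that, under the standing assumption $\beta\leq\alpha$, the inequality $\beta\leq\alpha-\delta_J$ is equivalent to $B_i(\beta)<\lambda_i$ for every $i\in J$. Hence $\Delta(\beta)$ is precisely the \emph{full simplex} on the vertex set $A(\beta)=\{i\in R_\alpha : B_i(\beta)<\lambda_i\}$, and the $\beta$-component is zero when $\beta\not\leq\alpha$. The augmented chain complex of a simplex is exact in strictly negative degrees, so each $\beta$-component is exact there, and assembling over $\beta$ yields exactness of $\mathcal{P}_\alpha$ in negative degrees.

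I expect the only real obstacle to be the bookkeeping in the second paragraph: one must check carefully that the cohomological shift and the sign convention align so that the restriction to the $\beta$-component literally coincides with the augmented simplicial boundary, rather than some twist of it. Once that identification is in place, contractibility of the simplex makes the argument purely formal, and as a byproduct one reads off that $H^0(\mathcal{P}_\alpha)$ has a basis indexed by the $\beta\leq\alpha$ with $A(\beta)=\emptyset$, i.e.\ those $\beta$ satisfying $B_i(\beta)=\lambda_i$ for every $i\in R_\alpha$.
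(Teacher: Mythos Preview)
Your argument is correct and follows essentially the same route as the paper: both decompose $\mathcal{P}_\alpha$ vertex by vertex (your ``$\beta$-component'' is the paper's $\mathcal{P}_\alpha\cdot x_{\beta\geq\beta}$), identify each piece with the augmented chain complex of a full simplex, and invoke contractibility. Your set $A(\beta)$ is exactly the paper's maximal $J_\beta$; if anything you are slightly more explicit in verifying that the collection $\{J:\beta\leq\alpha-\delta_J\}$ really is a full simplex, whereas the paper simply asserts the existence of a maximal such $J$---one small caveat is that your notation $\alpha_k$ for the $k$-th entry of $\alpha$ clashes with the paper's use of $\alpha_k$ for multiplicities.
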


\begin{proof} Notice that for any $\beta\leq\beta'$ there is a unique embedding of $P_{\beta}$ into $P_{\beta'}$ by left multiplication with $x_{\beta'\geq\beta}$ sending $x_{\beta\geq\gamma}\mapsto x_{\beta'\geq\gamma}$ for each $\gamma\leq\beta$. Thus, the maps $\partial^{-k}$ are all right $\mathcal{A}$-module maps. Therefore, it is enough to show that we have a complex in the category of $\Bbbk$-vector spaces.

For every $\beta\leq\alpha$, the graded $\Bbbk$-subspace $\mathcal{P}_{\alpha}\cdot x_{\beta\geq\beta}$ is actually a $\Bbbk$-subcomplex of $\mathcal{P}_{\alpha}$ since the differentials preserve the grading. Therefore, it is enough to prove the exactness of $\mathcal{P}_{\alpha}$ by showing we have an exact complex at each vertex in the support of $P_{\alpha}$. Note that $P_{\alpha}$ has support over the vertices $\beta\leq\alpha$.

For a given $\beta\leq\alpha$ we find the maximal $J_{\beta}\subseteq R_{\alpha}$ so that the inequality $\beta\leq \alpha-\delta_{J_{\beta}}$ holds. Let $k=|J_{\beta}|$. When we multiply the complex by $x_{\beta\geq\beta}$ on the right, each of the projective modules in $\mathcal{P}_{\alpha}$ reduces to the ground field $\Bbbk$. Moreover, after the reduction, the face maps in the differentials are all identity maps. Now, $J_{\beta}$ determines which summands of $\mathcal{P}_{\alpha}$ have $S_{\beta}$ in their composition series. Then we have the following subcomplex of $\Bbbk$-vector spaces:

$$\mathcal{S} : 0\rightarrow \Bbbk^{-k}\rightarrow\bigoplus_{\binom{k}{k-1}}\Bbbk^{-k+1}\rightarrow\cdots\rightarrow\bigoplus_{\binom{k}{2}}\Bbbk^{-2}\rightarrow\bigoplus_{\binom{k}{1}}\Bbbk^{-1}\rightarrow \Bbbk^0\rightarrow 0$$ with the differential as defined above. This is the face complex of the standard $(k-1)$-simplex. Since the standard $(k-1)$-simplex is contractible, its reduced homology is zero provided $k-1\geq 0$~\cite[Section 2.1]{Hatcher02}. This means that there is a homology if and only if $k=|J_{\beta}|=0$.
\[ H^{n}(\mathcal{S}) =
  \begin{cases}
    0       & \quad \text{if } 0<k\leq r, n\neq 0,\\
    \Bbbk  & \quad \text{if } k=0,\ n=0.\\
  \end{cases}
\] Then, we have
\[ H^{n}(\mathcal{P}_{\alpha})\cdot x_{\beta\geq\beta} =
  \begin{cases}
    0       & \quad \text{if } 0<k\leq r, n\neq 0,\\
    \Bbbk & \quad \text{if } k=0,\ n=0.\\
  \end{cases}
\] and this implies that $\mathcal{P}_{\alpha}$ is a projective resolution. 
\end{proof}

\begin{remark}\label{remark1} Notice that the homology of $\mathcal{P}_{\alpha}$ is supported only over vertices $\beta$ such that $J_{\beta}=\emptyset$. This implies that $\beta\leq \alpha$ and $\beta\nleq \alpha-\delta_i$ for each $i\in R_{\alpha}$. We will further investigate the homology of $\mathcal{P}_{\alpha}$ in Subsection~\ref{modules-intervals}.
\end{remark}

\begin{example}\label{pr-re} Let $\mathcal{A}$ be the incidence algebra for the poset $\mathsf{J(P_{5,7})}$. Let us consider $\alpha=(0|2,2,3,7|)$. Then $R_{\alpha}=\{1,2,3\}$ and 

\begin{align*}
\mathcal{P}_{\alpha} : 0\rightarrow P^{-3}_{(0,1,1,2,6)}\rightarrow & P^{-2}_{(0,1,1,2,7)}\oplus P^{-2}_{(0,2,2,2,6)}\oplus P^{-2}_{(0,1,1,3,6)}\rightarrow \\
& P^{-1}_{(0,1,1,3,7)}\oplus P^{-1}_{(0,2,2,2,7)}\oplus P^{-1}_{(0,2,2,3,6)} \rightarrow P^0_{(0,2,2,3,7)}\rightarrow 0.
\end{align*}

Now let $\alpha'=(0|2,2,3|7)$. Note that $\alpha'$ is the same as $\alpha$ except that $7$ is now a fixed entry. Then we have $R_{\alpha'}=\{1,2\}$ and 
$$\mathcal{P}_{\alpha'} : 0\rightarrow P^{-2}_{(0,1,1,2,7)}\rightarrow P^{-1}_{(0,1,1,3,7)}\oplus P^{-1}_{(0,2,2,2,7)}\rightarrow P^0_{(0,2,2,3,7)}\rightarrow 0$$ 
\end{example}

\subsection{Action of the Auslander-Reiten translation on the projective resolutions}\label{tau-inj-res}
In this subsection, we are going to look at the action of the Auslander-Reiten translation on the projective resolutions $\mathcal{P}_{\alpha}$ and discuss the homology of the resulting complex.

\begin{proposition}\label{ir}
Let $\mathcal{P}_{\alpha}$ be the projective resolution defined in Equation~\eqref{Palpha}. If we apply the Auslander-Reiten translation $\tau$ to the complex $\mathcal{P}_{\alpha}$, the resulting complex is an injective resolution up to a shift.
\end{proposition}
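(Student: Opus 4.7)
The plan is to apply $\tau = (-\otimes^L_{\mathcal{A}} D\mathcal{A})[-1]$ termwise to $\mathcal{P}_\alpha$. Since the derived Nakayama functor replaces each projective $P_\beta$ with the corresponding injective $I_\beta$ (and carries the face-map structure along), and the shift $[-1]$ moves everything one degree to the right, one gets the injective complex
\begin{equation*}
\mathcal{I}_\alpha:\ 0 \to I^{-r+1}_{\alpha-\delta_{R_\alpha}} \to \bigoplus_{|J|=r-1} I^{-r+2}_{\alpha-\delta_J} \to \cdots \to \bigoplus_{|J|=1} I^0_{\alpha-\delta_J} \to I^1_\alpha \to 0.
\end{equation*}
All terms are already injective, so the phrase ``injective resolution up to a shift'' reduces to proving that the cohomology of $\mathcal{I}_\alpha$ is concentrated in a single degree.

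My approach to the cohomology mirrors the proof of Proposition~\ref{pr}, but uses the dual support statement: $\dim_{\Bbbk}(I_\beta \cdot x_{\gamma\geq\gamma}) = 1$ when $\gamma \geq \beta$ and $0$ otherwise. Localizing $\mathcal{I}_\alpha$ at a vertex $\gamma$ keeps exactly those summands $I_{\alpha-\delta_J}$ with $\gamma \geq \alpha - \delta_J$. Coordinate by coordinate, this inequality is automatic at the fixed entries of $\alpha$ (provided $\gamma$ dominates them there) and at every non-fixed position $p$ with $\gamma_p \geq \lambda_i$; it forces $i \in J$ at every non-fixed position with $\gamma_p = \lambda_i - 1$; and it fails outright if some coordinate has $\gamma_p \leq \lambda_i - 2$, in which case the localization at $\gamma$ is empty.

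Writing $N_\gamma \subseteq R_\alpha$ for the (minimal) set of forced indices, the $J$'s contributing at $\gamma$ are precisely those with $N_\gamma \subseteq J \subseteq R_\alpha$. The localized subcomplex is therefore the augmented chain complex of the standard $(|R_\alpha \setminus N_\gamma|-1)$-simplex (placed in cohomological degrees $-r+1, \dots, -|N_\gamma|+1$), with differentials equal to the simplicial face maps. By the contractibility used in Proposition~\ref{pr}, this is acyclic unless $N_\gamma = R_\alpha$, in which case a single copy of $\Bbbk$ survives at the extreme degree $-r+1$. Since this degree does not depend on $\gamma$, the cohomology of $\mathcal{I}_\alpha$ is concentrated in degree $-r+1$, and so $\mathcal{I}_\alpha$ is an injective resolution of $H^{-r+1}(\mathcal{I}_\alpha)$ up to the shift $[r-1]$.

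The main obstacle I foresee is the combinatorial bookkeeping around the fixed $0$'s and $n$'s of the enhanced partition, and around vectors $\alpha - \delta_J$ that may fall outside the lattice of order ideals: one has to check that when $\alpha - \delta_J$ is not an admissible vertex the term $I_{\alpha - \delta_J}$ drops out in a way consistent with the simplex-complex picture, so that the face-map structure at each $\gamma$ really does reproduce the standard simplex. Once that dictionary between $\gamma$ and $N_\gamma$ is cleanly established, the contractibility step runs dually to the proof of Proposition~\ref{pr} and yields the proposition.
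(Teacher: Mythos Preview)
Your proposal is correct and follows essentially the same approach as the paper: localize the injective complex at each vertex $\gamma$, identify the minimal forced index set (the paper calls it $J_\gamma$, you call it $N_\gamma$), recognize the surviving summands as the augmented chain complex of a standard simplex, and invoke contractibility exactly as in Proposition~\ref{pr}. The concern in your final paragraph is a non-issue: since $\alpha\in E_L$ guarantees $\lambda_1\geq 1$ and the $\lambda_i$ are strictly increasing, every $\alpha-\delta_J$ remains a genuine partition (hence a vertex of $\mathsf{J(P_{m,n})}$), and the fixed $0$'s and $n$'s are never touched by any $\delta_i$.
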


\begin{proof}
After applying $-\otimes D\mathcal{A}$ on $\mathcal{P}_{\alpha}$, we get the following injective complex:
\begin{equation}\label{Ialpha}
\mathcal{I}_{\alpha}:0\rightarrow I^{-r}_{\alpha-\delta_{R_{\alpha}}}\rightarrow \displaystyle\bigoplus_{|J|=r-1} I^{-r+1}_{\alpha-\delta_J}\rightarrow \cdots\rightarrow \displaystyle\bigoplus_{|J|=1} I^{-1}_{\alpha-\delta_J}\rightarrow I^0_{\alpha}\rightarrow 0
\end{equation}

The proof of Proposition~\ref{pr} with some modifications can be applied here. Firstly, notice that $I_{\alpha-\delta_J}$ has support over the vertices $\gamma\geq\alpha-\delta_J$. Then we write the subcomplexes as follows: for a given $\gamma$ we find the minimal $J_{\gamma}$ such that the inequality $\gamma\geq\alpha-\delta_{J_{\gamma}}$ holds. Let $k=|J_{\gamma}|$, then the vertices $\gamma$ will appear as in the following:

$$0\rightarrow \Bbbk^{-r}_{\gamma}\rightarrow\bigoplus_{\binom{r-k}{1}}\Bbbk^{-r+1}_{\gamma}\rightarrow\cdots\rightarrow\bigoplus_{\binom{r-k}{r-k-2}}\Bbbk^{-k-2}_{\gamma}\rightarrow\bigoplus_{\binom{r-k}{r-k-1}}\Bbbk^{-k-1}_{\gamma}\rightarrow \Bbbk^{-k}_{\gamma}\rightarrow 0$$ This is another face complex which only has homology when $k=r$. Consequently, the complex~\eqref{Ialpha} is an injective resolution up to a shift. 

\end{proof}

\begin{remark}\label{remark2}
As in the projective case, the homology of $\mathcal{I}_{\alpha}$ has support over vertices $\gamma$ only when $J_{\gamma}=R_{\alpha}$. This means that $\alpha-\delta_{R_{\alpha}}\leq\gamma$ and $\alpha-\delta_J\nleq\gamma$ where $|J|=r-1$. We will further investigate the homology of $\mathcal{I}_{\alpha}$ in Subsection~\ref{modules-intervals}.
\end{remark}

\subsection{Intervals in the poset $\mathsf{P_{m,n}}$}\label{tildef} In this subsection, we are going to define two functions: $f$ from $E_{L}$ to $E_{R}$, and $g$ from $E_{R}$ to $E_{L}$. These functions will help us to describe the homology of the complexes defined in Subsections~\ref{s-proj-res} and \ref{tau-inj-res} combinatorially. 

Let $\alpha=(0^{\alpha_0}|\lambda_1^{\alpha_1},\lambda_2^{\alpha_2},\cdots,\lambda_r^{\alpha_r}|n^{\alpha_{r+1}})\in E_L$ and then $R_{\alpha}=\{1,2,\cdots,r\}$. 
  
\begin{enumerate}
\item Let $\alpha\in E_{L}$. The function $f(\alpha)$ is defined as follows: We first apply $\forget$ which is defined in \eqref{forget} to $\alpha$. Then for each $i\in R_{\alpha}$, we leave the last occurrence of $\lambda_i$ in $\alpha$ unchanged while we minimize the rest of the occurrences including the fixed $n$'s at the end if there are any, thus making the partition as small as possible. 
Finally, we enhance the result as follows: the first bar is placed in the same position as in $\alpha$ and if there is no $n$ in $f(\alpha)$ the second bar obviously goes at the end, while if there are $n$'s we put the second bar before all of them.

Formally,
\begin{align*}
f((0^{\alpha_0}|\lambda_1^{\alpha_1},&\lambda_2^{\alpha_2},\cdots,\lambda_r^{\alpha_r}|n^{\alpha_{r+1}}))\\
    =&\begin{cases}
    (0^{\alpha_0}|0^{\alpha_1-1},\lambda_1^{\alpha_2},\lambda_2^{\alpha_3},\cdots,\lambda_{r-1}^{\alpha_r}|\lambda_r^{1+\alpha_{r+1}})&\text{ if } \lambda_r= n,\\
    (0^{\alpha_0}|0^{\alpha_1-1},\lambda_1^{\alpha_2},\lambda_2^{\alpha_3},\cdots,\lambda_{r-1}^{\alpha_r},\lambda_r^{1+\alpha_{r+1}}|)&\text{ if } \lambda_r\neq n.
    \end{cases}
\end{align*}

\item Let $\alpha=(0^{\alpha_0}|\lambda_1^{\alpha_1},\lambda_2^{\alpha_2},\cdots,\lambda_r^{\alpha_r}|n^{\alpha_{r+1}})\in E_R$. Note that here $R_{\alpha}=\{1,2,\cdots,r\}$ if $\lambda_1\neq 0$ and $R_{\alpha}=\{2,\cdots,r\}$ if $\lambda_1=0$.
The function $g(\alpha)$ is defined as follows: We first apply $\forget$ to $\alpha$. For each $i\in R_{\alpha}$, we leave the first occurrence of $\lambda_i$ in $\alpha$ unchanged while maximizing the rest of the occurrences, thus making the partition as large as possible. Notice that we do not change $0$'s which were fixed in $\alpha$, but we do maximize the unfixed $0$'s. 
Then we place the first bar in the same place as in $\alpha$; the position of second bar can be seen in the following formal definition.

Formally,
\begin{align*}
g((0^{\alpha_0}|\lambda_1^{\alpha_1},&\lambda_2^{\alpha_2},\cdots,\lambda_r^{\alpha_r}|n^{\alpha_{r+1}}))\\
    =& \begin{cases}
       (0^{\alpha_0}|\lambda_2^{\alpha_1+1},\lambda_3^{\alpha_2},\cdots,\lambda_r^{\alpha_{r-1}}|n^{\alpha_{r}-1}) &\text{ if } \alpha_{r+1}=0\text{ and }\lambda_1=0,\\
       (0^{\alpha_0}|\lambda_2^{\alpha_1+1},\lambda_3^{\alpha_2},\cdots,\lambda_r^{\alpha_{r-1}},n^{\alpha_{r}}|n^{\alpha_{r+1}-1}) &\text{ if } \alpha_{r+1}\neq 0\text{ and }\lambda_1=0,\\
       (0^{\alpha_0}|\lambda_1,\lambda_2^{\alpha_1},\lambda_3^{\alpha_2},\cdots,\lambda_r^{\alpha_{r-1}}|n^{\alpha_{r}-1}) &\text{ if } \alpha_{r+1}=0 \text{ and } \lambda_1\neq 0,\\
       (0^{\alpha_0}|\lambda_1,\lambda_2^{\alpha_1},\lambda_3^{\alpha_2},\cdots,\lambda_r^{\alpha_{r-1}},n^{\alpha_{r}}|n^{\alpha_{r+1}-1})&\text{ if }\alpha_{r+1}\neq 0\text{ and }\lambda_1\neq 0.
     \end{cases}
\end{align*}
\end{enumerate}

\begin{example}\label{functions-on-enhanced} Let $\alpha=(0^2|2^3,5,6^4,9^2|13^2)=(0,0|2,2,2,5,6,6,6,6,9,9|13,13)$. Then $R_{\alpha}=\{1,2,3,4\}$. To find $f(\alpha)$ we first apply $\forget$. Now, we fix the last occurrences of each $\lambda_i$, $i\in R_{\alpha}$ while minimizing the rest as shown in the following:

\begin{align*}
& (0,0,2,2,2,5,6,6,6,6,9,9,13,13)\\
\text{We leave the last occurrences of each } \lambda_i \text{ unchanged: } & (0,0,2,2,\textcolor{red}{2}, \textcolor{red}{5},6,6,6,\textcolor{red}{6},9,\textcolor{red}{9},13,13)\\
\text{We minimize: } &(0,0,0,0,\textcolor{red}{2},\textcolor{red}{5},5,5,5,\textcolor{red}{6},6,\textcolor{red}{9},9,9)\\
\text{Then we enhance: } &(0,0|0,0,2,5,5,5,5,6,6,9,9,9|)
\end{align*}

The result is $f(\alpha)=(0^2|0^2,2,5^4,6^2,9^3|)$. 

Similarly, we can get $g(\alpha)=g((0^2|2^3,5,6^4,9^2|13^2))=(0^2|2,5^3,6,9^4,13^2|13).$
\end{example}

\begin{lemma}\label{fog} The functions $f$ and $g$ are inverses of each other.
\end{lemma}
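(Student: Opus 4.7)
My plan is to prove both $g \circ f = \mathrm{id}_{E_L}$ and $f \circ g = \mathrm{id}_{E_R}$ by direct case-by-case verification, guided by the following structural observation. Modulo the bars, $f$ leaves each \emph{last occurrence} of $\lambda_i$ (for $i \in R_\alpha$) untouched and pushes every other copy of $\lambda_i$ down to the next smaller distinct value $\lambda_{i-1}$ (with $\lambda_0 = 0$), while also collapsing the fixed $n$-block down to $\lambda_r$. Dually, $g$ leaves each \emph{first occurrence} of $\lambda_i$ untouched and lifts every other copy of $\lambda_i$ up to $\lambda_{i+1}$ (with $\lambda_{r+1} = n$). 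Since the position where $\lambda_i$ occurs for the last time in $\alpha$ is exactly the position where it occurs for the first time in $f(\alpha)$, the function $g$ will keep exactly those positions fixed when applied to $f(\alpha)$ and restore every other entry of $\alpha$ to its original value.

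To convert this observation into a formal proof, I would match the two cases of $f$ (governed by whether $\lambda_r = n$) with the four cases of $g$ (governed by the two flags $\lambda_1 = 0$ and $\alpha_{r+1} = 0$). Starting from $\alpha \in E_L$, the case of $g$ that applies to $f(\alpha)$ is determined by two flags read off from $\alpha$: first, whether $\alpha_1 > 1$, which controls whether $f(\alpha)$ begins with an unfixed $0$-block (equivalently, whether $\lambda_1 = 0$ on the $g$-side); second, whether $\lambda_r = n$, which controls whether $f(\alpha)$ ends with a fixed $n$-block (equivalently, whether $\alpha_{r+1} \neq 0$ on the $g$-side). This gives four sub-cases of $g \circ f$; in each I would substitute the explicit formula for $f(\alpha)$ into the matching formula for $g$ and collect exponents. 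For instance, when $\alpha_1 > 1$ and $\lambda_r = n$, $f(\alpha) = (0^{\alpha_0}|0^{\alpha_1-1}, \lambda_1^{\alpha_2}, \ldots, \lambda_{r-1}^{\alpha_r}|n^{1+\alpha_{r+1}})$ falls into the ``$\alpha_{r+1} \neq 0$ and $\lambda_1 = 0$'' case of $g$, and the formula immediately returns $(0^{\alpha_0}|\lambda_1^{\alpha_1}, \ldots, \lambda_{r-1}^{\alpha_{r-1}}, n^{\alpha_r}|n^{\alpha_{r+1}}) = \alpha$.

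The argument for $f \circ g = \mathrm{id}_{E_R}$ is entirely symmetric: the two flags $\lambda_1 = 0$ and $\alpha_{r+1} \neq 0$ now sit on the input side and determine which case of $f$ applies to $g(\alpha)$, and each of the four sub-cases unpacks similarly. The main obstacle is purely bookkeeping: in each sub-case the indices $\alpha_i$ shift by one on one side or the other, the exponents may hit zero and the two bars reposition accordingly, and an off-by-one slip in any of the eight total sub-cases would break the identity. There is no conceptual difficulty once the picture of ``last occurrences in $\alpha$ become first occurrences in $f(\alpha)$'' is in hand.
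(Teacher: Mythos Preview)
Your proposal is correct and follows essentially the same approach as the paper: direct verification by substituting the explicit formula for $f(\alpha)$ into the appropriate case of $g$ (and symmetrically for $f\circ g$). The paper's proof is terser---it writes out $f(\alpha)$ in the two cases $\lambda_r=n$ and $\lambda_r\neq n$ and then simply asserts ``we can easily conclude that the result is $\alpha$''---whereas you make explicit the further sub-case split on $\alpha_1>1$ versus $\alpha_1=1$ that determines which branch of $g$ applies, but this is exactly the bookkeeping the paper is leaving to the reader.
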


\begin{proof} Let $\alpha=(0^{\alpha_0}|\lambda_1^{\alpha_1},\lambda_2^{\alpha_2},\cdots,\lambda_r^{\alpha_r}|n^{\alpha_{r+1}})\in E_{L}$. Then
\begin{align*}
g\circ f(\alpha)=\begin{cases}
 g((0^{\alpha_0}|0^{\alpha_1-1},\lambda_1^{\alpha_2},\lambda_2^{\alpha_3},\cdots,\lambda_{r-1}^{\alpha_r}|\lambda_r^{\alpha_{r+1}+1}))&\text{ if } \lambda_r=n\\
 g((0^{\alpha_0}|0^{\alpha_1-1},\lambda_1^{\alpha_2},\lambda_2^{\alpha_3},\cdots,\lambda_{r-1}^{\alpha_r},\lambda_r^{\alpha_{r+1}+1}|))&\text{ if } \lambda_r\neq n\\
\end{cases}
\end{align*}
We can easily conclude that the result is $\alpha$. Similarly, one can show that $f\circ g(\beta)=\beta$ for $\beta\in E_{R}$.
\end{proof}

\subsection{Homologies and Intervals}\label{modules-intervals}
In this subsection, we will discuss the homology of $\mathcal{P}_{\alpha}$, and the homology of $\mathcal{I}_{\alpha}$ in relation to intervals in the poset $\mathsf{J(P_{m,n})}$.

\begin{definition} Let $[\gamma,\gamma']$ be an interval in $\mathsf{J(P_{m,n})}$. We define the corresponding element in the Grothendieck group $\mathcal{K}_0$ for the interval $[\gamma,\gamma']$ as $[[\gamma,\gamma']]:={\displaystyle \sum_{\gamma\leq x\leq \gamma'}}[S_x]$.
\end{definition}

\begin{proposition}\label{proj-int}
The class of the projective resolution $\mathcal{P}_{\alpha}$ in $\mathcal{K}_0$ is $[[f(\alpha),\alpha]]$ for every $\alpha\in E_L$.
\end{proposition}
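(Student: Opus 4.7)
The strategy is to pass through the Grothendieck group via the Euler characteristic, then identify the set of composition factors of $H^0(\mathcal{P}_{\alpha})$ with the partitions lying in the interval $[f(\alpha),\alpha]$.

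First, I would use the work already done in Proposition~\ref{pr} and Remark~\ref{remark1}. Since $\mathcal{P}_{\alpha}$ is a projective resolution, we have $[\mathcal{P}_{\alpha}]=[H^0(\mathcal{P}_{\alpha})]$ in $\mathcal{K}_0$. From the vertex-by-vertex analysis of $\mathcal{P}_{\alpha}\cdot x_{\beta\geq\beta}$ in that proof, the multiplicity of $S_{\beta}$ in $H^0(\mathcal{P}_{\alpha})$ is $1$ if $J_{\beta}=\emptyset$ and $0$ otherwise. Hence
\[
[\mathcal{P}_{\alpha}]=\sum_{\beta:\,J_{\beta}=\emptyset}[S_{\beta}],
\]
and the whole proposition reduces to the combinatorial identity
\[
\{\beta\in\mathsf{J(P_{m,n})} : J_{\beta}=\emptyset\}\;=\;\{\beta\in\mathsf{J(P_{m,n})} : f(\alpha)\leq\beta\leq\alpha\}.
\]

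Next, I would unwind what $J_{\beta}=\emptyset$ means concretely. By definition this says $\beta\leq\alpha$ and $\beta\not\leq\alpha-\delta_i$ for every $i\in R_{\alpha}$. The vector $\alpha-\delta_i$ agrees with $\alpha$ at every row except those where $\alpha$ equals $\lambda_i$, where it drops by one. So the failure of $\beta\leq\alpha-\delta_i$ must occur at a row $j$ where $\alpha_j=\lambda_i$, and at such a row $\beta_j\leq\alpha_j=\lambda_i$ forces $\beta_j=\lambda_i$. Thus $J_{\beta}=\emptyset$ is equivalent to: $\beta\leq\alpha$ and for every $i\in R_{\alpha}$, some row $j$ in the $\lambda_i$-block of $\alpha$ satisfies $\beta_j=\lambda_i$. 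Because $\beta$ is a partition (non-decreasing) sandwiched between $0$ and $\alpha$, this in turn is equivalent to the cleaner condition: $\beta_{s_i}=\lambda_i$ for each $i\in R_{\alpha}$, where $s_i:=\alpha_0+\alpha_1+\cdots+\alpha_i$ denotes the last row of the $\lambda_i$-block in $\alpha$.

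Finally, I would compute $f(\alpha)$ row by row from the formula in Subsection~\ref{tildef}: after forgetting bars, $f(\alpha)$ has value $0$ on rows $1,\ldots,s_1-1$, value $\lambda_i$ on rows $s_i,\ldots,s_{i+1}-1$ for $1\leq i\leq r-1$, and value $\lambda_r$ on rows $s_r,\ldots,m$. In particular $f(\alpha)_{s_i}=\lambda_i=\alpha_{s_i}$ at each pivot row. This makes the two directions transparent: if $J_{\beta}=\emptyset$, the equalities $\beta_{s_i}=\lambda_i$ together with monotonicity of $\beta$ force $\beta_j\geq\lambda_i=f(\alpha)_j$ on the whole block $s_i\leq j<s_{i+1}$, hence $\beta\geq f(\alpha)$; conversely, if $f(\alpha)\leq\beta\leq\alpha$, the pinching $\lambda_i=f(\alpha)_{s_i}\leq\beta_{s_i}\leq\alpha_{s_i}=\lambda_i$ gives exactly the required equalities. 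Both cases ($\lambda_r=n$ and $\lambda_r\neq n$) of the definition of $f$ produce the same underlying partition, so the argument does not branch. The one mildly delicate point, and the place I would double-check, is the monotonicity argument translating ``equal to $\lambda_i$ at some row of the block'' into ``equal to $\lambda_i$ at the last row of the block''; once that is isolated, the proof is just matching two explicit descriptions.
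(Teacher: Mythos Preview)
Your proposal is correct and follows essentially the same approach as the paper's proof: both reduce to showing that $\{\beta:\beta\leq\alpha,\ \beta\nleq\alpha-\delta_i\text{ for all }i\}$ is the interval $[f(\alpha),\alpha]$ by observing that the constraint at the $\lambda_i$-block forces $\beta_{s_i}=\lambda_i$ at the last row $s_i=\alpha_0+\cdots+\alpha_i$ via monotonicity of $\beta$, and then identifying the minimum of this set as $f(\alpha)$. Your write-up is somewhat more explicit in verifying both directions and in computing $f(\alpha)$ row by row, but the argument is the same.
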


\begin{proof} Let $I=\{x\in\mathsf{P_{m,n}}\mid x\leq \alpha \text{ and } x\nleq \alpha-\delta_i \text{ for any } i\in R_{\alpha}\}$. Firstly, notice that the class in $\mathcal{K}_0$ for the homology of the projective resolution $\mathcal{P}_{\alpha}$ is supported over the vertices in $I$ by the result in Proposition~\ref{pr}. Recall also Remark~\ref{remark1}. We will prove that $I$ forms an interval in the poset $\mathsf{J(P_{m,n})}$. Clearly, $\alpha$ is the maximum element in $I$. 

Let $\alpha=(0^{\alpha_0}|\lambda_1^{\alpha_1},\lambda_2^{\alpha_2},\cdots,\lambda_r^{\alpha_r}|n^{\alpha_{r+1}})$. We also write 
$\alpha=(a_1,a_2,\cdots,a_m)$. Then, we write $\alpha-\delta_i=(b_1,b_2,\cdots,b_m)$ where 
\begin{align*}  b_j =
\begin{cases}
 a_j  \qquad &\text{    when } a_j\neq \lambda_i,\\
 a_j-1 \qquad &\text{   otherwise.}
\end{cases}
\end{align*} for $1\leq j\leq m$. Let $x=(c_1,c_2,\cdots,c_m)\in I$. In order for $x\leq\alpha$ but $x\nleq\alpha-\delta_i$ for all $i\in R_{\alpha}$, for each $i$ we must have $c_i\leq a_i$, and at least one of $c_{\alpha_0+\cdots+\alpha_{i-1}+1},\cdots,c_{\alpha_0+\cdots+\alpha_i}$ must be greater than $b_{\alpha_0+\cdots+\alpha_{i-1}+1},\cdots,b_{\alpha_0+\cdots+\alpha_i}$, i.e. must equal $\lambda_i$. Since $c_1\leq c_2\leq\cdots\leq c_m$, it must be that $c_{\alpha_0+\cdots+\alpha_i}=\lambda_i$. Now, it is clear that $$I=\{x\in\mathsf{P_{m,n}}\mid c_{\alpha_0+\cdots+\alpha_i}=\lambda_i \text{ for each } i\in R_{\alpha}\}.$$ This is an interval having minimum $f(\alpha)$. This completes the proof. 
\end{proof}

The following is an illustration of the proof with an example. For this example, let us assume $\alpha=(|2,2,6,6,6,6,9,9,9,9,9|)$.  We illustrate the corresponding order ideal with the black contour in Figure~\ref{funcfunc}. Then we have $R_{\alpha}=\{1,2,3\}$. For instance, $\alpha-\delta_2=(2,2,5,5,5,5,9,9,9,9,9)$ which is shown with the red contour in Figure~\ref{funcfunc}. The gray box shows the row where $c_{\alpha_0+\alpha_1+\alpha_2}=c_6=\lambda_2=6$. Finally, $f(\alpha)$ is illustrated with the blue dotted contour in Figure~\ref{funcfunc}.

\begin{figure}[H]
\includegraphics[width=0.37\textwidth]{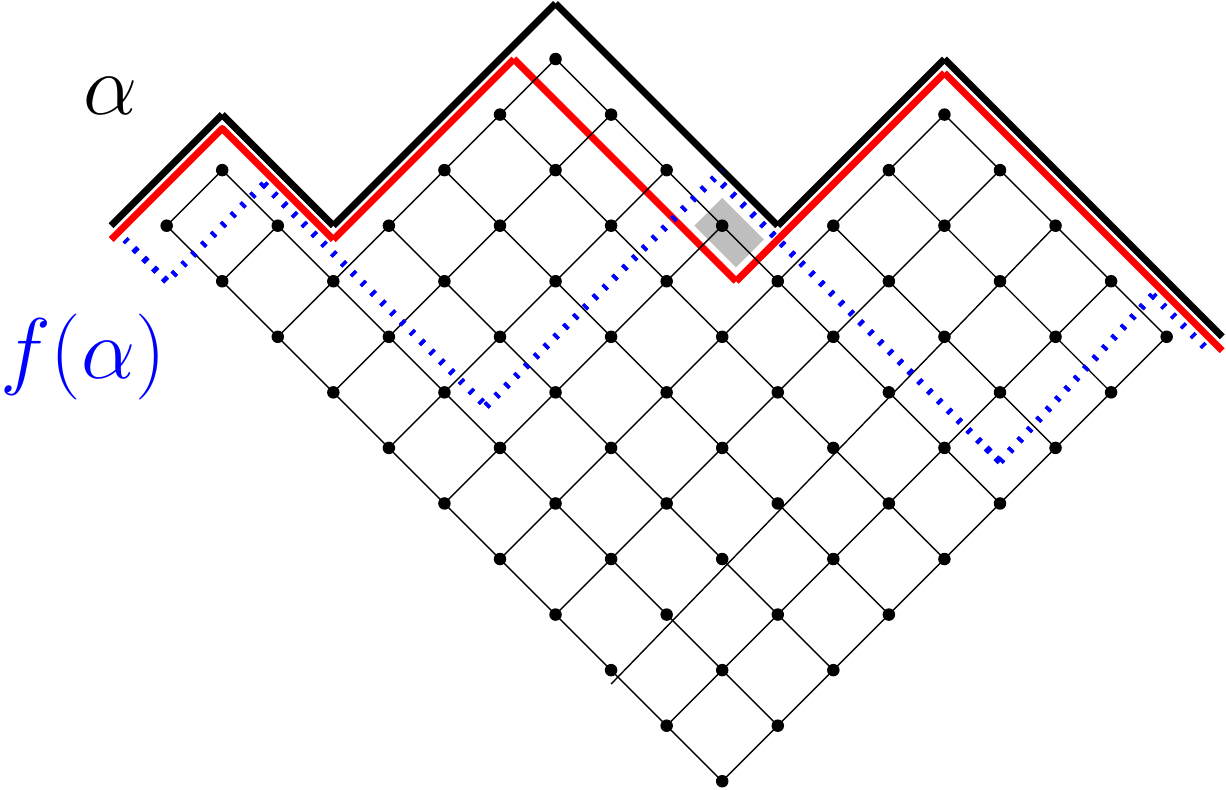}
\caption{An illustration of the function $f(\alpha)$}\label{funcfunc}
\end{figure}

\begin{example} As in Example \ref{pr-re}, let $\alpha=(0|2,2,3,7|)$ and consider its projective resolution. The corresponding element in $\mathcal{K}_0$ for the homology of this projective resolution is $[[f(\alpha),\alpha]]=[(0,0,2,3,7),(0,2,2,3,7)]$. Now, assume $\alpha'=(0|2,2,3|7)$, then $[[f(\alpha'),\alpha']]=[(0,0,2,3,3),(0,2,2,3,7)].$
\end{example}

In the following, we would like to analyze the homology of injective resolution $\mathcal{I}_{\alpha}$ after the action of $\tau$ on the projective resolution $\mathcal{P}_{\alpha}$. 

\begin{proposition}\label{inj-int}
The class of the injective resolution $\mathcal{I}_{\alpha}$ in $\mathcal{K}_0$ is $\pm [[\alpha-\delta_{R_{\alpha}},g(\alpha-\delta_{R_{\alpha}})]]$ for every $\alpha\in E_L$.
\end{proposition}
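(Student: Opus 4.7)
The plan is to work dually to the proof of Proposition~\ref{proj-int}: compute the class of $\mathcal{I}_\alpha$ as an Euler characteristic, identify the support of its cohomology with a combinatorially described interval, and check that the endpoints of that interval are $\alpha - \delta_{R_\alpha}$ and $g(\alpha - \delta_{R_\alpha})$.

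First, I would invoke the $\gamma$-subcomplex analysis from the proof of Proposition~\ref{ir}: for each $\gamma \in \mathsf{J(P_{m,n})}$, the $\gamma$-piece of $\mathcal{I}_\alpha$ is the augmented chain complex of an $(r-k-1)$-simplex where $k := |J_\gamma|$, and hence is acyclic unless $k = r$, in which case it reduces to a single copy of $\Bbbk$ in degree $-r$. Therefore $H^n(\mathcal{I}_\alpha) = 0$ for $n \neq -r$ and $H^{-r}(\mathcal{I}_\alpha) = \bigoplus_{\gamma \in I'} S_\gamma$, where $I' := \{\gamma : J_\gamma = R_\alpha\}$ (cf.~Remark~\ref{remark2}). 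Under the Euler characteristic isomorphism $\mathcal{K}_0(\mathcal{D}^b(\mathcal{A})) \cong \mathcal{K}_0(\mathcal{A})$, the class of $\mathcal{I}_\alpha$ equals $(-1)^r \sum_{\gamma \in I'}[S_\gamma]$, which already produces the overall $\pm$ sign.

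Next, I would characterize $I'$ combinatorially, mirroring the analysis of $I$ in the proof of Proposition~\ref{proj-int}. Writing $\alpha = (a_1,\ldots,a_m)$ and $\gamma = (c_1,\ldots,c_m)$, a position-by-position analysis of the inequality $\gamma \geq \alpha - \delta_J$ shows that the minimal such $J$ consists precisely of those $i \in R_\alpha$ for which some position $p$ with $a_p = \lambda_i$ has $c_p = \lambda_i - 1$. Using that $\gamma$ is non-decreasing and $c_p \geq \lambda_i - 1$ throughout the $\lambda_i$-block, the condition $J_\gamma = R_\alpha$ collapses to the single constraint $c_{p_i^*} = \lambda_i - 1$ for every $i \in R_\alpha$, where $p_i^* := \alpha_0 + \alpha_1 + \cdots + \alpha_{i-1} + 1$ is the first position at which $\alpha$ takes the value $\lambda_i$. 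These constraints are convex in the product order, so $I'$ is an interval: its minimum is $\alpha - \delta_{R_\alpha}$, and its maximum $\gamma^{\max}$ is obtained by pushing each unconstrained entry up to the nearest constraint above it---namely $c_p = \lambda_1 - 1$ for $p \leq p_1^*$, $c_p = \lambda_{i+1} - 1$ for $p_i^* < p \leq p_{i+1}^*$ with $1 \leq i \leq r-1$, and $c_p = n$ for $p > p_r^*$.

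The final step is to identify $\gamma^{\max}$ with $g(\alpha - \delta_{R_\alpha})$. The cleanest route uses Lemma~\ref{fog}: equip $\gamma^{\max}$ with its natural $E_L$-enhancement (the one with maximal leading-fixed-$0$ and trailing-fixed-$n$ blocks, so that $\lambda_1' \neq 0$), compute $f(\gamma^{\max})$ from its case-by-case definition, and verify that $f(\gamma^{\max}) = \alpha - \delta_{R_\alpha}$ as an $E_R$-enhanced partition. Since $g$ inverts $f$ by Lemma~\ref{fog}, this immediately gives $g(\alpha - \delta_{R_\alpha}) = \gamma^{\max}$, completing the proof. The main obstacle I foresee is the enhancement bookkeeping in this last step: the $E_R$-enhancement of $\alpha - \delta_{R_\alpha}$ inherited from $f$ shifts depending on whether $\lambda_1 = 1$, in which case the new zeros created by the subtraction get absorbed into the fixed-zero block, versus $\lambda_1 > 1$, in which case the leading zeros form an unfixed $\lambda_1' = 0$ block. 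The case split in the formulas for $g$ matches this dichotomy, but one has to chase through both sides carefully to confirm it.
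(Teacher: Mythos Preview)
Your proposal is correct and follows essentially the same route as the paper: dualize the argument of Proposition~\ref{proj-int}, use the cohomology computation from Proposition~\ref{ir} and Remark~\ref{remark2} to identify the support set $I'$, reduce membership in $I'$ to the constraints $c_{\alpha_0+\cdots+\alpha_{i-1}+1}=\lambda_i-1$, and then read off the endpoints of the resulting interval. The paper's proof is terser---it simply asserts that the maximum of $I'$ is $g(\alpha-\delta_{R_\alpha})$ after setting up the $E_R$-enhancement of $\alpha-\delta_{R_\alpha}$ in Remark~\ref{alpha-delta}---whereas you propose to verify the maximum via $f$ and Lemma~\ref{fog}, which is a perfectly legitimate alternative and arguably more transparent. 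The enhancement bookkeeping you flag as the main obstacle is exactly what the paper isolates in Remark~\ref{alpha-delta}; your case split on $\lambda_1=1$ versus $\lambda_1>1$ lines up with that remark, so there is no hidden difficulty beyond the careful check you already anticipate.
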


\begin{remark}\label{alpha-delta}
Before proving Proposition~\ref{inj-int}, we need to discuss the rule that enhances the partition $\alpha-\delta_{R_{\alpha}}$ so that we can apply the function $g$. Let us assume $\alpha=(0^{\alpha_0}|\lambda_1^{\alpha_1},\lambda_2^{\alpha_2},\cdots,\lambda_r^{\alpha_r}|n^{\alpha_{r+1}})\in E_L$. Then the enhanced partition $\alpha-\delta_{R_{\alpha}}$ is defined as follows: The position of the second bar is the same as in $\alpha$. This implies that we fix all of the $n$'s in $\alpha-\delta_{R_{\alpha}}$. Now, if we have $0$'s in $\alpha-\delta_{R_{\alpha}}$, we have to determine which of them are fixed. To do so, we will look at $\alpha$. Recall that $\lambda_1\neq 0$ in $\alpha$ for $\mathcal{P}_{\alpha}$. If $\lambda_1\neq 1$, then we do not fix any $0$'s in $\alpha-\delta_{R_{\alpha}}$, i.e we put the first bar before all of the entries. If $\lambda_1=1$, then we look at the location of first appearance of $\lambda_1$ in $\alpha$, say in $k$-th position from the beginning. Then we put the first bar after the $k$-th $0$ in $\alpha-\delta_{R_{\alpha}}$. Formally,
\begin{align*}
\alpha-\delta_{R_{\alpha}}=\begin{cases}
(|0^{\alpha_0},(\lambda_1-1)^{\alpha_1},(\lambda_2-1)^{\alpha_2},\cdots,(\lambda_r-1)^{\alpha_r}|n^{\alpha_{r+1}}) &\text{ if } \lambda_1\neq 1\\
(0^{\alpha_0+1}|0^{\alpha_1-1},(\lambda_2-1)^{\alpha_2},\cdots,(\lambda_r-1)^{\alpha_r}|n^{\alpha_{r+1}}) &\text{ if } \lambda_1=1.
\end{cases}
\end{align*}
\end{remark}

\begin{proof}[Proof of Proposition~\ref{inj-int}]
Here finding $g(\alpha-\delta_{R_{\alpha}})$ is the dual of finding $f(\alpha)$. 

Let $I'=\{x\in\mathsf{P_{m,n}}\mid \alpha-\delta_{R_{\alpha}}\leq x \text{ and } x\ngeq\alpha-\delta_J \text{ with } |J|=r-1\}$. In this case, we know the minimum element of $I'$ is $\alpha-\delta_{R_{\alpha}}$. 

Let $\alpha=(0^{\alpha_0}|\lambda_1^{\alpha_1},\lambda_2^{\alpha_2},\cdots,\lambda_r^{\alpha_r}|n^{\alpha_{r+1}})$, and let $x=(c_1,c_2,\cdots,c_m)\in I'$. We can deduce the following as in the proof of Proposition~\ref{proj-int}: In order for $\alpha-\delta_{R_{\alpha}}\leq x$ and  $x\ngeq\alpha-\delta_J$ where $|J|=r-1$, the entries $c_{\alpha_0+\cdots+\alpha_{i-1}+1}$ must equal $\lambda_i-1$ for each $i$. Now, we can conclude that $$I'=\{x\in\mathsf{P_{m,n}}\mid c_{\alpha_0+\cdots+\alpha_{i-1}+1}=\lambda_i-1 \text{ for each } i\in R_{\alpha}\}.$$ This is an interval with the maximum $g(\alpha-\delta_{R_{\alpha}})$. This finishes the proof.
\end{proof} 

We will illustrate the idea of the proof by an example as we study in the previous case. Assume $n=9$ and $\alpha=(|2,2,6,6,6,6,9,9,9,9,9|)$. Then $R_{\alpha}=\{1,2,3\}$ and $\alpha-\delta_{R_{\alpha}}=(|1,1,5,5,5,5,8,8,8,8,8|)$.  The corresponding order ideal is illustrated with the black contour in Figure~\ref{guncgunc}. Also, we can calculate that $g(\alpha-\delta_{R_{\alpha}})=(|1,5,5,8,8,8,8|9,9,9,9)$ as illustrated with the blue contour. The red contour shows $\alpha-\delta_{\{1,2\}}=(1,1,5,5,5,5,9,9,9,9,9)$. The gray box shows the row $\alpha_0+\alpha_1+\alpha_2+1=7$ where $c_{\alpha_0+\alpha_1+\alpha_2+1}=\lambda_3-1=8$.  

\begin{figure}[H]
\includegraphics[width=0.4\textwidth]{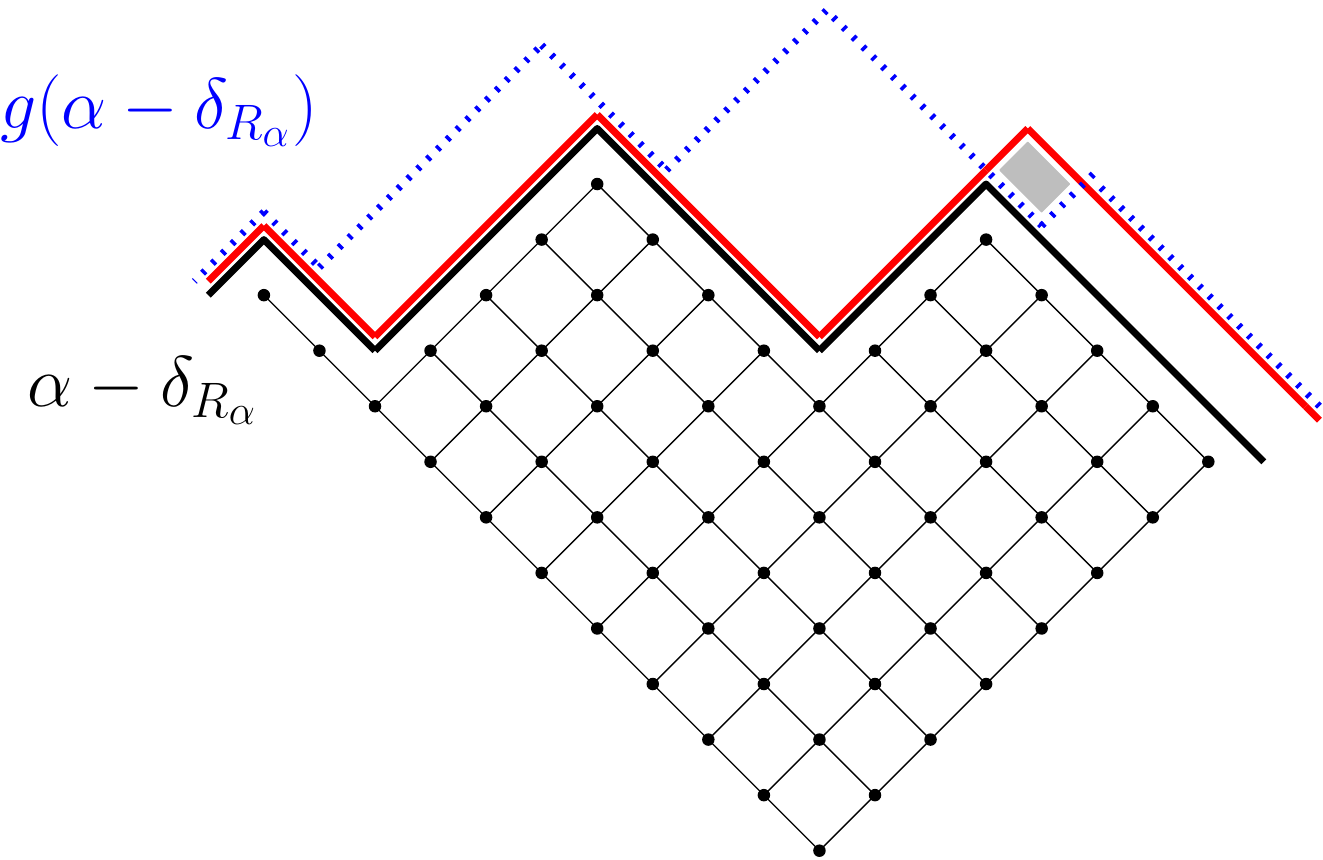}
\caption{An illustration of the function $g(\alpha)$}\label{guncgunc}
\end{figure}

Proposition~\ref{proj-int} and~\ref{inj-int} in combination with Proposition~\ref{ir} show that Auslander-Reiten translation sends $[[f(\alpha),\alpha]]$ to $\pm [[\alpha-\delta_{R_{\alpha}},g(\alpha-\delta_{R_{\alpha}})]]$. As we have seen, the function $\alpha\mapsto g(\alpha-\delta_{R_{\alpha}})$ is important, and it will be useful to calculate it more directly. So, we will define a new function $\tilde{f}$, and then later prove that $\tilde{f}(\alpha)=g(\alpha-\delta_{R_{\alpha}})$ in Lemma~\ref{tilde-fog}.

We define the function $\tilde{f}$ from $E_{L}$ to $E_{L}$ as follows: Let $\alpha\in E_{L}$. First apply $\forget$, then deduct one from the first occurrence of each $\lambda_i$, $i\in R_{\alpha}$ while maximizing all of the other indices, i.e. make the partition as large as possible. Then we fix all of the $0$'s, i.e. we put the first bar at the end of the $0's$ in the result. If we have $n$ fixed $k$ times in $\alpha$, we make $n$ fixed $k-1$ times in $\tilde{f}(\alpha)$. If we do not have any fixed $n$'s in $\alpha$, then we fix all $n$'s in $\tilde{f}(\alpha)$.

Formally,
\begin{align*}
\tilde{f}((0^{\alpha_0}|\lambda_1^{\alpha_1},&\lambda_2^{\alpha_2},\cdots,\lambda_r^{\alpha_r}|n^{\alpha_{r+1}}))\\
=&\begin{cases}
(0^{\alpha_0+1}|(\lambda_2-1)^{\alpha_1},\cdots,(\lambda_r-1)^{\alpha_{r-1}}|n^{\alpha_{r}-1})&\text{ if } \alpha_{r+1}=0 \text{ and } \lambda_1=1\\
(0^{\alpha_0+1}|(\lambda_2-1)^{\alpha_1},\cdots,(\lambda_r-1)^{\alpha_{r-1}},n^{\alpha_{r}}|n^{\alpha_{r+1}-1})&\text{ if }\alpha_{r+1}  \neq 0 \text{ and } \lambda_1=1\\
(|(\lambda_1-1)^{\alpha_0+1},(\lambda_2-1)^{\alpha_1},\cdots,(\lambda_r-1)^{\alpha_{r-1}}|n^{\alpha_{r}-1})&\text{ if } \alpha_{r+1}=0 \text{ and } \lambda_1\neq 1\\
(|(\lambda_1-1)^{\alpha_0+1},(\lambda_2-1)^{\alpha_1},\cdots,(\lambda_r-1)^{\alpha_{r-1}},n^{\alpha_{r}}|n^{\alpha_{r+1}-1})&\text{ if } \alpha_{r+1}\neq 0 \text{ and } \lambda_1\neq 1.
\end{cases} 
\end{align*}

\begin{example} Consider the same $\alpha$ as in Example~\ref{functions-on-enhanced}. Then
$$\tilde{f}(\alpha)=\tilde{f}((0^2|2^3,5,6^4,9^2|13^2))=(1^3,4^3,5,8^4,13^2|13)$$
\end{example}

\begin{lemma}~\label{tilde-fog}
We have $\tilde{f}(\alpha)=g(\alpha-\delta_{R_{\alpha}})$ for every $\alpha\in E_L$. 
\end{lemma}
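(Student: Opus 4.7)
The plan is to verify the identity by a direct case analysis, matching each branch of the piecewise definition of $\tilde{f}(\alpha)$ against the result of applying $g$ to the enhanced partition $\alpha - \delta_{R_\alpha}$ described in Remark~\ref{alpha-delta}. The two definitions already have the same number of branches, and once $\alpha - \delta_{R_\alpha}$ is rewritten in standard enhanced form the correspondence reduces to a finite check.

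First I would rewrite $\alpha - \delta_{R_\alpha}$ in standard enhanced-partition notation $(0^{\tilde{\alpha}_0}|\mu_1^{\tilde{\alpha}_1},\ldots,\mu_s^{\tilde{\alpha}_s}|n^{\tilde{\alpha}_{s+1}})$, splitting on whether $\lambda_1 \neq 1$ (so $\tilde{\alpha}_0 = 0$, no zeros fixed) or $\lambda_1 = 1$ (so $\tilde{\alpha}_0 = \alpha_0 + 1$). In either case, subtracting $\delta_{R_\alpha}$ decreases every occurrence of each non-fixed $\lambda_i$ by exactly one while leaving fixed zeros and fixed $n$'s alone, so the new distinct non-fixed values $\mu_i$ and multiplicities $\tilde{\alpha}_j$ can be read off directly from Remark~\ref{alpha-delta}. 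I then apply $g$, which itself branches according to whether $\mu_1 = 0$ and whether $\tilde{\alpha}_{s+1} = 0$; combined with the initial split, the relevant auxiliary subcases are $\alpha_0 = 0$ vs $\alpha_0 > 0$ inside the case $\lambda_1 \neq 1$, and $\alpha_1 = 1$ vs $\alpha_1 > 1$ inside the case $\lambda_1 = 1$. In each subcase I would expand the output of $g$ back in terms of the original $\lambda_i$ and $\alpha_j$, and verify that the result is literally the branch of $\tilde{f}(\alpha)$ with the corresponding hypotheses on $\lambda_1$ and $\alpha_{r+1}$. Conceptually both operations perform the same combinatorial move: decrease each non-fixed value by one, keep a single "first" copy of each new value in place, push up (maximize) the remaining copies, and migrate one unit to or from the terminal $n$-block.

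The main obstacle will be the bookkeeping of indices: the number $s$ of distinct non-fixed values in $\alpha - \delta_{R_\alpha}$ can be $r-1$, $r$, or $r+1$ depending on the subcase, so when invoking $g$ the indices $\tilde{\alpha}_j$ are shifted relative to $\alpha_j$. The one genuinely subtle check is that the "$+1$" appearing on the first multiplicity in the definition of $g$ reproduces either the fixed-zero count $\alpha_0 + 1$ or the multiplicity in $(\lambda_1-1)^{\alpha_0+1}$ that appears in the formula for $\tilde{f}(\alpha)$; once this is confirmed, the remaining branches reduce to routine substitution.
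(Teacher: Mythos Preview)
Your proposal is correct and follows essentially the same route as the paper: split on $\lambda_1 = 1$ versus $\lambda_1 \neq 1$, write $\alpha - \delta_{R_\alpha}$ in enhanced form via Remark~\ref{alpha-delta}, apply the piecewise definition of $g$, and compare to the branches of $\tilde{f}$. The paper presents only the $\lambda_1 \neq 1$ case explicitly and writes the answer in the unified form $(\lambda_1-1)^{\alpha_0+1}$ without separating $\alpha_0=0$ from $\alpha_0>0$; your additional subcases ($\alpha_0=0$ versus $\alpha_0>0$, and $\alpha_1=1$ versus $\alpha_1>1$) are exactly what is needed to invoke the correct branch of $g$, so your plan is, if anything, more carefully spelled out than the paper's.
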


\begin{proof} Let $\alpha=(0^{\alpha_0}|\lambda_1^{\alpha_1},\lambda_2^{\alpha_2},\cdots,\lambda_r^{\alpha_r}|n^{\alpha_{r+1}})\in E_L$.
Recall that in Remark~\ref{alpha-delta} we explained how we get the enhanced partition $\alpha-\delta_{R_{\alpha}}$. So, we have the $0$'s fixed in $\alpha-\delta_{R_{\alpha}}$ only when $\lambda_1=1$ in $\alpha$. Recall also that since $\alpha\in E_L$, we have $\lambda_1\neq 0$. Firstly assume $\lambda_1\neq 1$, i.e. there is no $0$ fixed in $\alpha-\delta_{R_{\alpha}}$.  Then we get $\alpha-\delta_{R_{\alpha}}=(|0^{\alpha_0},(\lambda_1-1)^{\alpha_1},(\lambda_2-1)^{\alpha_2},\cdots,(\lambda_r-1)^{\alpha_r}|n^{\alpha_{r+1}})$. Now, we apply the map $g$. We get the following which is the desired result. 
\begin{align*}
g(\alpha-\delta_{R_{\alpha}})=
\begin{cases}
(|(\lambda_1-1)^{\alpha_0+1},(\lambda_2-1)^{\alpha_1},\cdots,(\lambda_r-1)^{\alpha_{r-1}}|n^{\alpha_r-1})  &\text{ if } \alpha_{r+1} = 0\\
(|(\lambda_1-1)^{\alpha_0+1},(\lambda_2-1)^{\alpha_1},\cdots,(\lambda_r-1)^{\alpha_{r-1}},n^{\alpha_r}|n^{\alpha_{r+1}-1}) &\text{ if } \alpha_{r+1}\neq 0.
\end{cases}
\end{align*}

The case $\lambda_1=1$ can be calculated similarly. This finishes the proof.
\end{proof}

\begin{proposition} $\tau([\mathcal{P}_{\alpha}])=\pm [\mathcal{P}_{g(\alpha-\delta_{R_{\alpha}})}]$, or equivalently                                           $\tau([\mathcal{P}_{\alpha}])=\pm [\mathcal{P}_{\tilde{f}(\alpha)}]$.
\end{proposition}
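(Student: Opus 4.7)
My plan is to assemble the proposition by concatenating the results established earlier in this section: Proposition~\ref{ir}, Proposition~\ref{inj-int}, Lemma~\ref{tilde-fog}, Proposition~\ref{proj-int}, and Lemma~\ref{fog}. No new combinatorial calculation should be needed beyond bookkeeping, because each of these results already did one piece of the translation between projective resolutions, injective resolutions, intervals, and the functions $f$, $g$, $\tilde{f}$.

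First I would use Proposition~\ref{ir} to pass from $\tau([\mathcal{P}_{\alpha}])$ to $(-1)^{s}[\mathcal{I}_{\alpha}]$ in $\mathcal{K}_0$, where $s$ accounts for the shift appearing in that proposition (so the overall sign is $\pm$). Next, Proposition~\ref{inj-int} rewrites this as $\pm [[\alpha-\delta_{R_{\alpha}},\, g(\alpha-\delta_{R_{\alpha}})]]$, and Lemma~\ref{tilde-fog} identifies $g(\alpha-\delta_{R_{\alpha}}) = \tilde{f}(\alpha)$, giving $\pm [[\alpha-\delta_{R_{\alpha}},\, \tilde{f}(\alpha)]]$. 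To convert this interval back into the class of a projective resolution I would set $\beta := \tilde{f}(\alpha) = g(\alpha-\delta_{R_{\alpha}})$ and invoke Lemma~\ref{fog} to conclude $f(\beta) = f(g(\alpha-\delta_{R_{\alpha}})) = \alpha-\delta_{R_{\alpha}}$. Proposition~\ref{proj-int}, applied to $\beta$, then yields $[\mathcal{P}_{\beta}] = [[f(\beta),\beta]] = [[\alpha-\delta_{R_{\alpha}},\, \tilde{f}(\alpha)]]$, which closes the chain of equalities.

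The one non-routine point, and the main obstacle I expect, is verifying that $\beta = \tilde{f}(\alpha)$ actually lies in $E_L$, since otherwise Proposition~\ref{proj-int} does not apply and $\mathcal{P}_{\beta}$ is not even defined. This reduces to checking that the first non-fixed entry of $\tilde{f}(\alpha)$ is strictly positive. Inspecting the four cases in the formula for $\tilde{f}$: if $\lambda_1 = 1$, the first non-fixed entry is $\lambda_2 - 1$, and since $\lambda_2 > \lambda_1 = 1$ this is at least $1$; if $\lambda_1 \neq 1$, then $\lambda_1 \geq 2$ (using $\alpha \in E_L$, so $\lambda_1 \neq 0$), so the first non-fixed entry $\lambda_1 - 1$ is at least $1$. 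Hence $\tilde{f}(\alpha) \in E_L$, as required.

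Stitching the steps together gives $\tau([\mathcal{P}_{\alpha}]) = \pm [\mathcal{P}_{\tilde{f}(\alpha)}] = \pm [\mathcal{P}_{g(\alpha-\delta_{R_{\alpha}})}]$, and the equivalence of the two presentations in the statement is then immediate from Lemma~\ref{tilde-fog}. The upshot is that the Coxeter transformation permutes (up to sign) the spanning set $\{[\mathcal{P}_{\alpha}]\}_{\alpha \in E_L}$, which is exactly the combinatorial data needed to set up the periodicity argument in the subsequent sections.
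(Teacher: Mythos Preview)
Your proposal is correct and follows essentially the same approach as the paper: use Propositions~\ref{ir} and~\ref{inj-int} to identify $\tau([\mathcal{P}_{\alpha}])$ with $\pm[[\alpha-\delta_{R_{\alpha}},g(\alpha-\delta_{R_{\alpha}})]]$, then apply Lemma~\ref{fog} together with Proposition~\ref{proj-int} to recognize this interval as $[\mathcal{P}_{g(\alpha-\delta_{R_{\alpha}})}]$. Your explicit verification that $\tilde{f}(\alpha)\in E_L$ is a detail the paper leaves implicit (it simply declares $\tilde{f}\colon E_L\to E_L$ when defining the map), so your write-up is, if anything, slightly more careful.
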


\begin{proof} Since we proved that $f$ and $g$ are inverses of each other in Lemma~\ref{fog}, it is easy to see that the class of the projective resolution $\mathcal{P}_{g(\alpha-\delta_{R_{\alpha}})}$ of the enhanced partition $g(\alpha-\delta_{R_{\alpha}})$ in $\mathcal{K}_0$ is $\pm [[\alpha-\delta_{R_{\alpha}},g(\alpha-\delta_{R_{\alpha}})]]$.
\end{proof}

To sum up, for any enhanced partition $\alpha \in E_L$, one can write the projective resolution $\mathcal{P}_{\alpha}$ and find the class of $\mathcal{P}_{\alpha}$ in $\mathcal{K}_0$ which is $[[f(\alpha),\alpha]]$. The class of $\tau \mathcal{P}_{\alpha}$ in $\mathcal{K}_0$ is $[[\alpha-\delta_{R_{\alpha}},g(\alpha-\delta_{R_{\alpha}})]]$. Moreover, we can determine $g(\alpha-\delta_{R_{\alpha}})$ directly from $\alpha$ by using the map $\tilde{f}$. We are now in a good position to iterate the application of $\tau$.

\section{Configurations and Enhanced Partitions}
The goal of this section is to define a bijection from $E_L$ to 
a set $\mathcal{D}_{m,n}$ which we define below and has a natural action of $\mathbb{Z}/m+n+1$. 

\subsection{A combinatorial model: Configurations}

Consider $\mathcal{Z}:=\{-m,\cdots,-1,0,1,\cdots,n\}$ for the elements of $\mathbb{Z}/m+n+1$. 

\begin{definition} \emph{A configuration} $D$ is an increasing sequence of $m$ elements from $\mathcal{Z}$. We write $D=\{i_1< i_2<\cdots<i_m\}$ for a configuration. The set $\mathcal{D}_{m,n}$ denotes all configurations. Notice that the cardinality of $\mathcal{D}_{m,n}$ is $\binom{m+n+1}{m}$.
\end{definition}

Consider a configuration $D$. By $D\{i\}$ we mean the $i$ times shifted version of $D$, i.e. $D\{i\}$ is the set of elements $\{i_1-i,\cdots, i_m-i\}$ which are sorted into increasing order, and we write $sorted\{i_1-i,\cdots, i_m-i\}$. Call this operation $\{i\}$ a shift. Clearly, $D\{m+n+1\}=D$. The set of all configurations $D\{i\}$ for all $0\leq i \leq m+n$ is called \emph{the full orbit of} $D$.

Recall that $E_{L}$ is the set of enhanced partitions of the form $\alpha=(0^{\alpha_0}|\lambda_1^{\alpha_1},\lambda_2^{\alpha_2},\cdots,\lambda_r^{\alpha_r}|n^{\alpha_{r+1}})$ where $\lambda_1\neq 0$. We also write it as a sequence $\alpha=(a_1,a_2,\cdots,a_m)$. We are going to define a function $\psi$ from $E_L$ to $\mathcal{D}_{m,n}$ as follows.

Let us first define a function $\mu_{\alpha}: \{1,\cdots,m\}\to \mathcal{Z}$ 
\[ \mu_{\alpha}(j) =
  \begin{cases}
    a_j  & \quad \text{if } j=\sum_{i=0}^k \alpha_i \text{ for some } k\in\{0,\ldots,r\},\\
    -j  & \quad \text{otherwise.}\\
  \end{cases}
\] It is easy to see that it is well-defined. Now, we define $$\psi: E_L\to \mathcal{D}_{m,n}$$ as follows: $$\psi(\alpha)=sorted\{\mu_{\alpha}(j)\mid \text{ for } j\in \{1,\cdots,m\}\}.$$

\begin{example}\label{ex-frame} We continue in the setting of Example~\ref{pr-re}. Consider the enhanced partition $\alpha=(0|2,2,3,7|)$, then the corresponding configuration is $\psi(\alpha)=\{-2<0<2<3<7\}.$ Now, consider the enhanced partition $\alpha'=(0|2,2,3|7)$. Then the corresponding configuration is $\psi(\alpha')=\{-5<-2<0<2<3\}.$
\end{example}

\begin{lemma}~\label{psi-bijection} The map $\psi$ is a bijection.
\end{lemma}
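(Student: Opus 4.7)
The plan is to construct an explicit inverse $\phi : \mathcal{D}_{m,n} \to E_L$ and verify that $\psi \circ \phi$ and $\phi \circ \psi$ are the identity maps.

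First I would verify that $\psi$ actually lands in $\mathcal{D}_{m,n}$, i.e.\ that $\psi(\alpha)$ consists of $m$ distinct elements of $\mathcal{Z}$. The non-boundary positions $j$ contribute values $-j \in \{-m,\dots,-1\}$, which are pairwise distinct. The boundary positions contribute the list $0$ (only if $\alpha_0 > 0$) together with $\lambda_1 < \lambda_2 < \cdots < \lambda_r$, all of which are non-negative and pairwise distinct (using $\alpha \in E_L$, so $\lambda_1 \neq 0$). Since negative and non-negative values cannot collide, $\psi(\alpha)$ has exactly $m$ elements.

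Next I would construct $\phi$. Given $D \in \mathcal{D}_{m,n}$, split $D$ into its negative part and its non-negative part. Let $J = \{\,j \in \{1,\dots,m\} : -j \notin D\,\}$ (the prospective boundary positions) and $V = D \cap \{0,1,\dots,n\}$ (the prospective boundary values); a direct count using $|D| = m$ gives $|J| = |V| =: t$. Write $J = \{j_1 < \cdots < j_t\}$ and $V = \{v_1 < \cdots < v_t\}$. Define a non-decreasing sequence $(a_1,\dots,a_m)$ by $a_j = v_k$ for $j_{k-1} < j \leq j_k$ (with $j_0 = 0$) and $a_j = n$ for $j_t < j \leq m$. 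Finally, enhance: place the first bar after the initial $0$'s (so $\alpha_0 = j_1$ if $v_1 = 0$ and $\alpha_0 = 0$ otherwise), and place the second bar immediately after position $j_t$ (so $\alpha_{r+1} = m - j_t$). By construction $\phi(D) \in E_L$, since the first positive distinct value (if any) is $v_1$ when $v_1 > 0$, or $v_2$ when $v_1 = 0$, and in either case $\lambda_1 \neq 0$.

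To finish I would verify the two compositions. For $\phi \circ \psi = \mathrm{id}_{E_L}$: the boundary positions of $\alpha$ are exactly $\{\,\alpha_0 + \cdots + \alpha_k : 0 \leq k \leq r\,\} \cap \{1,\dots,m\}$, and these are precisely the positions $j$ for which $-j$ does not appear in $\psi(\alpha)$; moreover, $\psi(\alpha)\cap\{0,\dots,n\}$ is exactly the sorted list of values at these boundary positions, and the bar placement rule in $\phi$ reproduces the bars of $\alpha$. For $\psi \circ \phi = \mathrm{id}_{\mathcal{D}_{m,n}}$: by construction, $\phi(D)$ has boundary positions $J$ with values $V$, so $\psi$ outputs exactly $V \cup \{-j : j \in [m] \setminus J\} = V \cup (D \cap \{-m,\dots,-1\}) = D$.

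The main obstacle I anticipate is tracking the enhancement bookkeeping cleanly across edge cases, namely: handling whether $v_1 = 0$ (determining the first bar), whether $v_t = n$ (determining whether $\lambda_r$ equals $n$), and whether $j_t = m$ (determining whether $\alpha_{r+1} = 0$). If desired, one can avoid much of this by instead proving injectivity only and invoking the identity $|E_L| = \sum_{t \geq 0} \binom{m}{t}\binom{n+1}{t} = \binom{m+n+1}{m} = |\mathcal{D}_{m,n}|$ (Vandermonde), which makes surjectivity automatic.
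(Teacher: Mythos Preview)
Your proposal is correct, and it takes a genuinely different route from the paper. The paper argues by cardinality plus injectivity: it observes that an element of $E_L$ is the same datum as a size-$m$ multiset on $\{0,1,\ldots,n,n^*\}$ (using $n^*$ for the fixed $n$'s), so $|E_L|=\binom{m+n+1}{m}=|\mathcal{D}_{m,n}|$, and then shows directly that $\psi(\alpha)=\psi(\beta)$ forces $\alpha=\beta$ by reading off first the positive entries and then the blocks of negatives. You instead build an explicit inverse $\phi$ and check both composites; your fallback suggestion (injectivity plus a Vandermonde count) is exactly the paper's strategy. It is worth noting that immediately after its proof the paper \emph{does} write down a candidate inverse $\varphi:\mathcal{D}_{m,n}\to E_L$ via the auxiliary map $\eta_D$, but explicitly declines to verify it is inverse to $\psi$; your $\phi$ agrees with this $\varphi$ (e.g.\ on $D=\{-5,-2,0,2,3\}$ both produce $(0|2,2,3|7)$), so your argument in effect supplies the verification the paper omits. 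The trade-off is the expected one: the paper's proof is shorter and avoids the enhancement edge cases you flag, while yours is more constructive and makes the correspondence between boundary positions and non-negative entries of $D$ completely transparent.
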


\begin{proof} We can think of an element in $E_L$ as a multiset on $\{0,1,\cdots,n,n^*\}$ where we use $n^*$ to represent the fixed $n$'s. Notice that the cardinalities of $E_L$ and $\mathcal{D}_{m,n}$ are the same. Then, it is enough to show that the map $\psi$ is injective. 

Let $\alpha=(0^{\alpha_0}|\lambda_1^{\alpha_1},\lambda_2^{\alpha_2},\cdots,\lambda_r^{\alpha_r}|n^{\alpha_{r+1}})$, $\beta=(0^{\beta_0}|\xi_1^{\beta_1},\xi_2^{\beta_2},\cdots,\xi_s^{\beta_s}|n^{\beta_{s+1}})\in E_L$. To simplify the exposition, we assume $\alpha_i>1$ for all $i\in\{0,1,\cdots,r\}$ and $\beta_j>1$ for all $j\in\{0,1,\cdots,s\}$. The general case is similar. 

We also write 
\begin{align*}
\alpha =(a_1,\ldots,a_{\alpha_0-1},a_{\alpha_0}|a_{\alpha_0+1},\ldots &, a_{\alpha_0+\alpha_1-1},a_{\alpha_0+\alpha_1},\ldots, \\
& a_{\alpha_0+\alpha_1+\cdots+\alpha_{r-1}+1},\ldots,a_{\alpha_0+\alpha_1+\cdots+\alpha_r-1},a_{\alpha_0+\alpha_1+\cdots+\alpha_r}|n^{\alpha_{r+1}})\\
\beta =(b_1,\ldots,b_{\beta_0-1},b_{\beta_0},b_{\beta_0+1},\ldots &, b_{\beta_0+\beta_1-1},b_{\beta_0+\beta_1},\ldots, \\
& b_{\beta_0+\beta_1+\cdots+\beta_{s-1}+1},\ldots,b_{\beta_0+\beta_1+\cdots+\beta_s-1},b_{\beta_0+\beta_1+\cdots+\beta_s}|n^{\beta_{s+1}})
\end{align*}

Assume $\psi(\alpha)=\psi(\beta)$, then we have

\begin{align*}
\{\underbrace{-m<-m+1<\cdots<-m+\alpha_{r+1}-1}_{\alpha_{r+1}}<\underbrace{-x_0<-x_0+1<\cdots<-x_0+\alpha_r-2}_{\alpha_r-1}<\\
\cdots<\underbrace{-x_r<-x_r+1<\cdots<-x_r+\alpha_0-2}_{\alpha_0-1}<0<\lambda_1<\cdots<\lambda_r\}\\
\\
=\{\underbrace{-m<-m+1<\cdots<-m+\beta_{s+1}-1}_{\beta_{s+1}}<\underbrace{-y_0<-y_0+1<\cdots<-y_0+\beta_s-2}_{\beta_s-1}<\\
\cdots<\underbrace{-y_s<-y_s+1<\cdots<-y_s+\beta_0-2}_{\beta_0-1}<0<\xi_1<\cdots<\xi_s\}
\end{align*} where $x_k=\alpha_0+\alpha_1+\cdots+\alpha_{r-k}-1$ for $k\in\{0,1,\cdots,r\}$ and $y_k=\beta_0+\beta_1+\cdots+\beta_{s-k}-1$ for $k\in\{0,1,\cdots,s\}$.

Since $\lambda_i$'s and $\xi_i$'s are all positive and linearly ordered, then for each $i> 0$, $\lambda_i=\xi_i$ and $r=s$. Now, assume $\alpha_{r+1}\neq\beta_{s+1}$. Without loss of generality, say $\beta_{s+1}<\alpha_{r+1}$. Then $-m+\beta_{r+1}-1< -m+\alpha_{r+1}-1$ which implies $-m+\beta_{r+1}\leq -m+\alpha_{r+1}-1$. Thus, $-m+\beta_{r+1}\in \psi(\alpha)$. But this is a contradiction, because $-m+\beta_{r+1}$ cannot be in $\psi(\beta)$. By the same argument, we can prove that for each $j$, $\alpha_j=\beta_j$. This proves that $\alpha=\beta$. 
\end{proof}

Now, let $\mathcal{F}:=\{0,1,\cdots,n,n^*\}$ where $n^*$ is a formal element distinct from $n$. We also define a map $\varphi$ from $\mathcal{D}_{m,n}$ to $E_L$. Let $D=\{i_1< i_2<\cdots<i_m\}$. First of all, let us define a function $\eta_D:\{1,\cdots,m\}\to \mathcal{F}$ as follows. For each $j$,
\begin{equation}~\label{eta}
\eta_D(j) =
  \begin{cases}
    i_{|i_j|+j}  & \quad \text{if } i_j< 0 \text{ and } |i_j|+j< m+1,\\
    n^* & \quad \text{if } i_j< 0 \text{ and } |i_j|+j= m+1,\\
    i_j  & \quad \text{otherwise.}
  \end{cases}
\end{equation}  

\begin{lemma} The map $\eta_D$ is well-defined. 
\end{lemma}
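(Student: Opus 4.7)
The plan is to verify three things: (i) the three cases in the definition of $\eta_D$ actually partition all possibilities for $j \in \{1,\ldots,m\}$, (ii) whenever the first case fires the index $|i_j|+j$ actually lies in $\{1,\ldots,m\}$ so that $i_{|i_j|+j}$ is meaningful, and (iii) the output always lies in $\mathcal{F} = \{0,1,\ldots,n,n^*\}$. The second and third cases are transparent once (i) and (ii) are clear; the real content is a pair of counting observations about the strictly increasing sequence $i_1 < i_2 < \cdots < i_m$ drawn from $\mathcal{Z} = \{-m,\ldots,-1,0,1,\ldots,n\}$.

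First I would establish the key inequality: if $i_j < 0$, then $|i_j|+j \leq m+1$. The argument is that $i_1 < i_2 < \cdots < i_j$ are $j$ distinct elements of the set $\{-m,-m+1,\ldots,i_j\} = \{-m,\ldots,-|i_j|\}$, which has cardinality $m-|i_j|+1$. Hence $j \leq m-|i_j|+1$, as claimed. This immediately shows the first two cases of the definition exhaust the negative-$i_j$ situation (either $|i_j|+j \leq m$ or $|i_j|+j = m+1$), while the third case handles $i_j \geq 0$, so the cases are exhaustive and mutually exclusive. It also certifies that in the first case the index $|i_j|+j$ lies in $\{1,\ldots,m\}$, making $i_{|i_j|+j}$ well-defined.

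Next I would check the output is in $\mathcal{F}$. In the third case $i_j \in \{0,1,\ldots,n\} \subset \mathcal{F}$, and in the second case $n^* \in \mathcal{F}$ by definition, so only the first case needs work: I must show $i_{|i_j|+j} \geq 0$. This follows from a second elementary count. The $|i_j|+1$ values $i_j, i_{j+1}, \ldots, i_{|i_j|+j}$ are strictly increasing integers with minimum $i_j = -|i_j|$; hence $i_{j+t} \geq -|i_j| + t$ for each $t \in \{0,1,\ldots,|i_j|\}$, and in particular $i_{|i_j|+j} \geq 0$, so it lies in $\{0,1,\ldots,n\} \subseteq \mathcal{F}$.

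Neither step is genuinely difficult; the main thing to get right is simply recognizing that the tightness of a strictly increasing integer sequence starting at $-|i_j|$ is exactly what forces both $|i_j|+j \leq m+1$ (from below) and $i_{|i_j|+j} \geq 0$ (from above). I expect no real obstacle here — the proposition amounts to bookkeeping that will take only a few lines.
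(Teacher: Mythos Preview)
Your proof is correct and follows essentially the same approach as the paper. Both arguments exploit the same counting observation about strictly increasing integer sequences in $\mathcal{Z}$: the paper splits $D$ into its negative block $i_1,\ldots,i_{l-1}$ and nonnegative block $i_l,\ldots,i_m$, then uses the chain $|i_1|>|i_2|>\cdots>|i_{l-1}|$ to deduce $|i_j|+j\geq l$, forcing $i_{|i_j|+j}$ to land in the nonnegative block; you instead argue directly that the $|i_j|+1$ consecutive terms $i_j,\ldots,i_{|i_j|+j}$ must climb from $-|i_j|$ to at least $0$. Your write-up is a bit more thorough in explicitly verifying that the three cases exhaust all possibilities (i.e.\ that $|i_j|+j\leq m+1$ whenever $i_j<0$), a point the paper leaves implicit.
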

\begin{proof} Let $D=\{i_1< i_2<\cdots<i_m\}$. The only case we need to check is that if $|i_j|+j< m+1$ and $i_j< 0$, then $i_{|i_j|+j}\in \mathcal{F}$. Assume $i_l,\cdots,i_m$ are nonnegative. Then we have the following,
$$m> |i_1|> |i_2| > |i_3| > \cdots > |i_{l-1}| > 0$$ which implies
$$m\geq |i_1|+1\geq |i_2|+2 \geq |i_3|+3 \geq \cdots \geq |i_{l-1}| + l-1 \geq l.$$ Therefore, for $1\leq j\leq l-1$, we have $m\geq |i_j|+j\geq l$. This shows that $i_{|i_j|+j}\in \mathcal{F}.$
\end{proof}

Now, we define a map \[\varphi: \mathcal{D}_{m,n}\to E_L\] as follows: \[\varphi(D)=enhanced(sorted\{\eta_D(j)\mid j\in \{1,\cdots,m\}\})\] where we first sort into a non-decreasing order with $n^*$ after $n$ and then enhance the result as follows: fix all of the $0$'s and fix all of the $n^*$'s. This map is the inverse map of $\psi$. But we will not prove it since we do not need this fact.

\section{The main result}  We consider the Grothendieck group of the bounded derived category of the incidence algebra $\mathcal{A}$ of the poset of order ideals in a grid $\mathsf{J(P_{m,n})}$.  In this section, we are going to prove the following:

\begin{theorem}~\label{tau-on-grid}
The Auslander-Reiten translation $\tau$ has finite order on $\mathcal{K}_0(\mathcal{D}^b(\mathcal{A}))$ for the incidence algebra $\mathcal{A}$ of the poset of order ideals $\mathsf{J(P_{m,n})}$ of a grid poset $\mathsf{P_{m,n}}$. Specifically, $\tau^{m+n+1}=\pm id$. 
\end{theorem}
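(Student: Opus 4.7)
The plan is to transport the problem to the combinatorial model $\mathcal{D}_{m,n}$. Since the final proposition of the previous section already gives $\tau[\mathcal{P}_\alpha]=\pm[\mathcal{P}_{\tilde f(\alpha)}]$, once we identify $\tilde f$ with the cyclic shift $D\mapsto D\{1\}$ on $\mathcal{D}_{m,n}$ under the bijection $\psi$ of Lemma~\ref{psi-bijection}---a shift that clearly has order $m+n+1$---we obtain $\tilde f^{\,m+n+1}=\mathrm{id}_{E_L}$ and hence $\tau^{m+n+1}[\mathcal{P}_\alpha]=\epsilon(\alpha)[\mathcal{P}_\alpha]$ with $\epsilon(\alpha)\in\{\pm 1\}$. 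Three tasks remain: (a) the combinatorial identity $\psi\circ\tilde f=(\,\cdot\{1\})\circ\psi$; (b) uniformity of the sign $\epsilon(\alpha)$; and (c) that the collection $\{[\mathcal{P}_\alpha]:\alpha\in E_L\}$ spans $\mathcal{K}_0(\mathcal{D}^b(\mathcal{A}))$.

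For (a), I would decompose $\psi(\alpha)$ into its block-end values ($0$ when $\alpha_0>0$, together with $\lambda_1,\ldots,\lambda_r$) and its filler values $-j$ at non-block-end positions $j\in\{1,\ldots,m\}$, and then match everything case by case against the four cases of $\tilde f$. Shifting by $1$ sends each $\lambda_i$ to the new block-end value $\lambda_i-1$, each non-wrapping filler $-j$ to $-(j+1)$, and the wrap-around value $-m-1\equiv n$ to the new non-fixed $n$-block that $\tilde f$ creates when $\alpha_{r+1}>0$; the subcase $\lambda_1=1$ accounts for $0\in\psi(\alpha)$ becoming the new filler $-1$ and for a new fixed $0$ appearing in $\tilde f(\alpha)$. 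The key bookkeeping identity is that the block-end positions of $\tilde f(\alpha)$ inside $\{1,\ldots,m\}$ are precisely those of $\alpha$ shifted by $+1$.

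For (b), the sign in $\tau[\mathcal{P}_\alpha]=\pm[\mathcal{P}_{\tilde f(\alpha)}]$ is exactly $(-1)^{|R_\alpha|+1}$: the injective complex $\mathcal{I}_\alpha$ has homology concentrated in degree $-|R_\alpha|$, and $\tau$ includes a further shift by $[-1]$. Via $\psi$, $|R_\alpha|$ equals $|\psi(\alpha)\cap\{1,\ldots,n\}|$, the number of strictly positive entries in the configuration. A double-counting over the $m+n+1$ iterates, using that each $v\in\{-m,\ldots,n\}$ lies in exactly $m$ of the configurations $\psi(\tilde f^i(\alpha))$ for $i\in\{0,1,\ldots,m+n\}$ (since shifting by $i$ runs $v+i$ through every residue modulo $m+n+1$), gives
\[
  \sum_{i=0}^{m+n}|R_{\tilde f^i(\alpha)}|
    \;=\;\sum_{v=1}^{n}\bigl|\{\,i:v\in\psi(\tilde f^i(\alpha))\,\}\bigr|
    \;=\;mn,
\]
independent of $\alpha$. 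Hence $\epsilon(\alpha)=(-1)^{mn+(m+n+1)}=(-1)^{(m+1)(n+1)}$, a uniform sign.

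For (c), given any vertex $\gamma$ of $\mathsf{J(P_{m,n})}$, enhance $\gamma$ by fixing all of its $0$'s and all of its $n$'s to obtain $\beta\in E_L\cap E_R$ with underlying partition $\gamma$; then $\alpha:=g(\beta)\in E_L$ satisfies $f(\alpha)=\beta$, so $[\mathcal{P}_\alpha]=[[\gamma,\alpha]]$ is an interval class with minimum $\gamma$. A reverse-order induction on $\gamma$ in $\mathsf{J(P_{m,n})}$---with the top element $(n^m)$ handled trivially by $\alpha=(||n^m)$---extracts $[S_\gamma]=[\mathcal{P}_\alpha]-\sum_{x>\gamma,\,x\in[\gamma,\alpha]}[S_x]$, each $[S_x]$ on the right already being in the span. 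Combining (a)--(c) gives $\tau^{m+n+1}=(-1)^{(m+1)(n+1)}\mathrm{id}=\pm\mathrm{id}$, as desired. The main obstacle is the tedious case-analysis in (a); (b) drops out cleanly from the orbit-averaging and (c) is a standard triangularity argument.
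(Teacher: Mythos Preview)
Your proposal is correct and follows the same three-part architecture as the paper's proof: part (a) is exactly Proposition~\ref{tau-finite} (the commuting square $\psi\circ\tilde f=\{1\}\circ\psi$), and part (b) is Proposition~\ref{sign}, with your orbit double-counting yielding the same sign $(-1)^{(m+1)(n+1)}$ as the paper's step-by-step tracking.

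The one genuine variation is in (c). The paper's Proposition~\ref{generate} proves spanning by expressing each indecomposable projective class $[P_\alpha]$ in terms of the $[\mathcal{P}_\bullet]$'s, using the Koszul-type identity $[P_\alpha]=[\mathcal{P}_\alpha]+\sum_{J\neq\emptyset}(-1)^{|J|+1}[P_{\alpha-\delta_J}]$ and strong induction on the lexicographic order of partitions. You instead express each simple class $[S_\gamma]$ by manufacturing, via $\alpha=g(\beta)$ with $\beta$ the ``fully fixed'' enhancement of $\gamma$, an interval class $[\mathcal{P}_\alpha]=[[\gamma,\chi(\alpha)]]$ with prescribed minimum $\gamma$, and then peel off $[S_\gamma]$ by downward induction on the poset. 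Both arguments are short and valid; yours has the virtue of using the bijection $f\leftrightarrow g$ of Lemma~\ref{fog} directly and never unpacking the resolution formula, while the paper's stays closer to the homological origin of the $\mathcal{P}_\alpha$ and avoids the edge cases of $g$ at $r=0$.
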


Our main result follows from two auxiliary Propositions.

\begin{enumerate}[(1)]
\item In Proposition \ref{tau-finite} we show that Auslander-Reiten translation satisfies $\tau^{2(m+n+1)} = id$ on the elements $[\mathcal{P}_{\alpha}]$ in $\mathcal{K}_0$.
\item In Proposition \ref{generate} we show that the Grothendieck group $\mathcal{K}_0$ is generated by the elements of the form $[\mathcal{P}_{\alpha}]$. 
\end{enumerate} 

\begin{proposition}\label{tau-finite} $\tau^{2(m+n+1)}=id$ on the elements $[\mathcal{P}_{\alpha}]$ in $\mathcal{K}_0$. 
\end{proposition}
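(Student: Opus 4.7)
The strategy is to use the bijection $\psi \colon E_L \to \mathcal{D}_{m,n}$ of Lemma~\ref{psi-bijection} to transport the dynamics of $\tilde f$ to the cyclic shift $\{1\}$ on configurations, and then invoke the elementary fact that this shift has order $m+n+1$. From the immediately preceding proposition, $\tau([\mathcal{P}_\alpha]) = \pm[\mathcal{P}_{\tilde f(\alpha)}]$, so iterating gives $\tau^k([\mathcal{P}_\alpha]) = \pm[\mathcal{P}_{\tilde f^k(\alpha)}]$ for every $k \geq 0$. Consequently, once I know that $\tilde f^{\,m+n+1} = \mathrm{id}_{E_L}$, I obtain $\tau^{m+n+1}([\mathcal{P}_\alpha]) = \pm[\mathcal{P}_\alpha]$, and squaring kills the sign to yield $\tau^{2(m+n+1)}([\mathcal{P}_\alpha]) = [\mathcal{P}_\alpha]$, which is exactly the proposition.

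The heart of the proof is therefore the equivariance claim
\[
\psi(\tilde f(\alpha)) = \psi(\alpha)\{1\} \qquad \text{for every } \alpha \in E_L,
\]
i.e.\ that $\tilde f$ corresponds under $\psi$ to decrementing every entry of the configuration by one in $\mathbb{Z}/(m+n+1)$. Granted this, bijectivity of $\psi$ together with $D\{m+n+1\} = D$ forces $\tilde f^{\,m+n+1} = \mathrm{id}$, completing the argument. To verify the equivariance I would unpack both sides in terms of the data $(\alpha_0, \lambda_1^{\alpha_1}, \ldots, \lambda_r^{\alpha_r}, \alpha_{r+1})$: by construction $\psi(\alpha)$ is partitioned into the \emph{positive marks} $\lambda_1, \ldots, \lambda_r$ (together with $0$ if $\alpha_0 > 0$) and the \emph{negative marks} $-j$ indexed by those positions $j$ that are not block-boundaries. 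Subtracting one from the positive marks produces $\lambda_i - 1$, matching the rule ``deduct one from the first occurrence of each $\lambda_i$'' in the definition of $\tilde f$; a positive mark equal to $1$ migrates into the fixed-$0$ stratum, matching the $\lambda_1 = 1$ cases of $\tilde f$ (in which $\alpha_0$ is incremented); and the smallest negative mark wraps from $-m$ to $n$ precisely when $\alpha_{r+1} = 0$, matching the rule that if there are no fixed $n$'s then $\tilde f$ fixes them all. The four cases in the definition of $\tilde f$ correspond exactly to the four sign combinations of ``$\lambda_1 = 1$?'' and ``$\alpha_{r+1} = 0$?'', and in each case one checks term-by-term that the formula for $\tilde f(\alpha)$ produces the unique enhanced partition whose marks are those of $\psi(\alpha)$ decremented by one. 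The example calculations in the section suggest this is straightforward pattern matching rather than subtle combinatorics.

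The main obstacle is the bookkeeping in this case analysis, particularly the two transitions across the cyclic boundary: a positive mark $1$ crossing into the fixed-$0$ region, and a negative mark $-m$ wrapping around to the positive mark $n$. The bar-placement conventions for enhanced partitions, and the corresponding special cases in the formula for $\mu_\alpha$ defining $\psi$, are precisely what guarantee these transitions are accounted for coherently; making this explicit is the content of the case analysis, and everything else in the proof is a formal consequence of iteration.
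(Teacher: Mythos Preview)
Your proposal is correct and follows essentially the same route as the paper: both establish the commutativity $\psi\circ\tilde f=\{1\}\circ\psi$, deduce $\tilde f^{\,m+n+1}=\mathrm{id}$ from the order of the cyclic shift, and then square to kill the sign. One small slip in your heuristic: the element $-m$ lies in $\psi(\alpha)$ (and hence wraps to $n$ under $\{1\}$) precisely when $\alpha_{r+1}>0$, not when $\alpha_{r+1}=0$, since position $m$ fails to be a block boundary exactly when there are fixed $n$'s; this is harmless for the argument, as the case analysis you outline still goes through once the two boundary cases are assigned correctly.
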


\begin{proof} 

First, we are going to prove that the following diagram commutes since the shift $\{1\}$ has finite order of $(m+n+1)$ on $\mathcal{D}_{m,n}$. 
\begin{center}
$\xymatrix{E_L \ar@{>}[r]^{\psi} \ar@{>}[d]_{\tilde{f}}  & \mathcal{D}_{m,n} \ar@{>}[d]^{\{1\}}\\
           E_L\ar@{>}[r]_{\psi} & \mathcal{D}_{m,n}}$ 
\end{center}
Let $\alpha=(0^{\alpha_0}|\lambda_1^{\alpha_1},\lambda_2^{\alpha_2},\cdots,\lambda_r^{\alpha_r}|n^{\alpha_{r+1}})\in E_L$ be an enhanced partition. To simplify the exposition, assume $\alpha_i>1$ for all $i$ and assume $\lambda_1\neq 1$. The general case is similar. Then, $\tilde{f}(\alpha)=((\lambda_1-1)^{\alpha_0+1},(\lambda_2-1)^{\alpha_1},\cdots,(\lambda_r-1)^{\alpha_{r-1}},n^{\alpha_{r}}|n^{\alpha_{r+1}-1})$

\begin{align*}
\psi(\alpha)&=\{\underbrace{-m<-m+1<\cdots<-m+\alpha_{r+1}-1}_{\alpha_{r+1}}<\underbrace{-x_0<-x_0+1<\cdots<-x_0+\alpha_r-2}_{\alpha_r-1}<\\
&\cdots<\underbrace{-x_r<-x_r+1<\cdots<-x_r+\alpha_0-2}_{\alpha_0-1}<0<\lambda_1<\cdots<\lambda_r\}\\ 
&\\
\psi(\tilde{f}(\alpha))&=\{\underbrace{-m<-m+1<\cdots<-m+\alpha_{r+1}-2}_{\alpha_{r+1}-1}<\underbrace{-x_0-1<-x_0<\cdots<-x_0+\alpha_r-3}_{\alpha_r-1}\\
&<\cdots<\underbrace{-x_{r-1}-1<-x_{r-1}<\cdots<-x_{r-1}+\alpha_r-3}_{\alpha_{1}-1}\\
&<\underbrace{-x_r-1<-x_r<\cdots<-x_r+\alpha_0-2}_{\alpha_0}<\lambda_1-1<\cdots<\lambda_r-1<n\}
\end{align*} where the $x_i$'s are defined as in the proof of Lemma~\ref{psi-bijection}. 

From these calculations, it is easy to see that $\psi(\alpha)\{1\}=\psi(\tilde{f}(\alpha))$. This shows that $\tau^{m+n+1}[\mathcal{P}_{\alpha}]=\pm [\mathcal{P}_{\alpha}]$. Therefore, $\tau^{2(m+n+1)}[\mathcal{P}_{\alpha}]= [\mathcal{P}_{\alpha}]$. The proof of the case $\lambda_1=1$ is similar. Also, observe that the order of $\tau$ cannot be less than $(m+n+1)$ because this is obviously true for the action of $\{1\}$ on $\mathcal{D}_{m,n}$. This finishes the proof. 
\end{proof}

\begin{example} In this example, we will write the action of $\tau$ algebraically and combinatorially. Assume $m=5$ and $n=3$.

Let $\alpha=(|1,1,2,3,3|)$. Write the projective resolution as follows:
\begin{align*}
\mathcal{P}_{\alpha}:0\rightarrow P^{-3}_{(0,0,1,2,2)}\rightarrow P^{-2}_{(0,0,1,3,3)} & \oplus P^{-2}_{(1,1,1,2,2)}\oplus P^{-2}_{(0,0,2,2,2)}\rightarrow \\
&P^{-1}_{(0,0,2,3,3)}\oplus P^{-1}_{(1,1,1,3,3)}\oplus P^{-1}_{(1,1,2,2,2)}\rightarrow P^0_{(|1,1,2,3,3|)}\rightarrow 0
\end{align*}
with the homology $[[f(\alpha),\alpha]]=[[(0,1,2,2,3),(1,1,2,3,3)]].$

Apply $\tau$ to $\mathcal{P}_{\alpha}$:
\begin{align*}
\mathcal{I}_{\alpha}:0\rightarrow I^{-3}_{(0,0,1,2,2)}\rightarrow I^{-2}_{(0,0,1,3,3)} & \oplus I^{-2}_{(1,1,1,2,2)}\oplus I^{-2}_{(0,0,2,2,2)}\rightarrow \\
&I^{-1}_{(0,0,2,3,3)}\oplus I^{-1}_{(1,1,1,3,3)}\oplus I^{-1}_{(1,1,2,2,2)}\rightarrow I^0_{(1,1,2,3,3)}\rightarrow 0
\end{align*}
with the homology $[[(0,0,1,2,2),(0,1,1,2,3)]].$

Note that $\tilde{f}((|1,1,2,3,3|))=(0|1,1,2|3)$. So, $\tau \mathcal{P}_{(|1,1,2,3,3|)}\cong \mathcal{P}_{(0|1,1,2|3)}[-2]$. In the Grothendieck group, we have the following

$$\tau [[(0,1,2,2,3),(1,1,2,3,3)]]=[[(0,0,1,2,2),(0,1,1,2,3)]]$$ Now, let us look at the action of $\tau$ combinatorially. Firstly, we find the corresponding configuration $D$ of $\alpha$ which is $D=\psi(\alpha)=\{-4\leq -1\leq 1\leq 2\leq 3\}$. 

We now compute that $\psi\tilde{f}(\alpha)=\psi((0|1,1,2|3))=\{-5<-2<0<1<2\}$ which equals $D\{1\}$. 
\end{example}

\begin{proposition}~\label{sign} Let $\mathsf{J(P_{m,n})}$ be the poset of order ideal of a grid poset $\mathsf{P_{m,n}}$. If $m$, $n$ are both even, then the order of $\tau$ is $2(m+n+1)$. Otherwise, the order of $\tau$ is $m+n+1$.
\end{proposition}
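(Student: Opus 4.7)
The plan is to refine the sign analysis of Proposition~\ref{tau-finite} by pinning down exactly when $\tau^{m+n+1}$ acts as $+id$ versus $-id$ on the spanning set $\{[\mathcal{P}_\alpha]\}_{\alpha \in E_L}$. The crux is to track the precise sign at each step $\tau[\mathcal{P}_\alpha] = \pm[\mathcal{P}_{\tilde f(\alpha)}]$. Since $\tau \mathcal{P}_\alpha$ equals the injective complex $\mathcal{I}_\alpha$ of Equation~\eqref{Ialpha} shifted one place to the right, and the cohomology of $\mathcal{I}_\alpha$ is concentrated in degree $-r$ with $r := |R_\alpha|$, after the shift the cohomology sits in degree $-r+1$. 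Passing to alternating sums in $\mathcal{K}_0$ and combining Propositions~\ref{inj-int} and~\ref{proj-int} with Lemma~\ref{tilde-fog} and $f \circ g = id$, one obtains
$$[\tau \mathcal{P}_\alpha] \;=\; (-1)^{-r+1}\,\bigl[\bigl[\alpha-\delta_{R_\alpha},\, g(\alpha-\delta_{R_\alpha})\bigr]\bigr] \;=\; (-1)^{r-1}\,[\mathcal{P}_{\tilde f(\alpha)}].$$
Iterating, the sign accumulated after one full period of $\tau$ is $(-1)^{S-(m+n+1)}$, where $S = \sum_{k=0}^{m+n}|R_{\tilde f^k(\alpha)}|$.

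Next I transfer $|R_\alpha|$ to the configuration model. Inspecting the definition of $\psi$ shows that the positive entries of $\psi(\alpha)$ are precisely the distinct values $\lambda_1,\ldots,\lambda_r$ appearing unfixed in $\alpha$: the fixed $n$'s contribute negative entries $-j$ at the tail of the index set, while the leading $0$'s contribute at most the single entry $0$. Thus $|R_\alpha| = r(D) := |D \cap \{1,\ldots,n\}|$ for $D = \psi(\alpha)$. Using the commutative diagram of Proposition~\ref{tau-finite}, which identifies $\tilde f$ with the shift $\{1\}$, a full period amounts to letting $k$ range over $0,1,\ldots,m+n$. As $k$ varies, each element of the $m$-set $D \subseteq \mathcal{Z} \cong \mathbb{Z}/(m+n+1)\mathbb{Z}$ cycles through every residue class exactly once, and hence visits the $n$ positive positions $\{1,\ldots,n\}$ exactly $n$ times. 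Therefore $S = mn$ independently of $\alpha$, and the total sign simplifies to $(-1)^{mn + m + n + 1} = (-1)^{(m+1)(n+1)}$.

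The conclusion is by parity. When $m$ and $n$ are both even, $(m+1)(n+1)$ is odd, so $\tau^{m+n+1}[\mathcal{P}_\alpha] = -[\mathcal{P}_\alpha]$ for every $\alpha$; invoking Proposition~\ref{generate} to pass from the spanning set to all of $\mathcal{K}_0$ gives $\tau^{m+n+1} = -id$, and combined with $\tau^{2(m+n+1)} = id$ from Proposition~\ref{tau-finite} the order of $\tau$ is exactly $2(m+n+1)$. Otherwise $(m+1)(n+1)$ is even, the same argument gives $\tau^{m+n+1} = id$ on $\mathcal{K}_0$, and the matching lower bound $\geq m+n+1$ is already recorded at the end of the proof of Proposition~\ref{tau-finite} via the fact that the shift $\{1\}$ has order exactly $m+n+1$ on $\mathcal{D}_{m,n}$. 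The main obstacle I foresee is the careful bookkeeping of the cohomological sign against the unspecified $\pm$ of Proposition~\ref{inj-int}; once $|R_\alpha|$ has been reinterpreted as a positive-entry count via $\psi$, the orbit-sum identity $S = mn$ becomes forced by the cyclic group structure on $\mathcal{Z}$.
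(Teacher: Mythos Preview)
Your proposal is correct and follows essentially the same route as the paper: both arguments pin down the single-step sign as $(-1)^{|R_\alpha|-1}$, reinterpret $|R_\alpha|$ as the number of strictly positive entries of the configuration $\psi(\alpha)$, and then use the cyclic orbit structure on $\mathcal{Z}$ to sum these counts to $mn$, yielding the total sign $(-1)^{(m+1)(n+1)}$. The only cosmetic difference is that the paper packages the sign-tracking via an auxiliary function $r\colon D\to\{\pm 1\}$ and a telescoping product, whereas you compute the accumulated exponent $S-(m+n+1)$ directly; your explicit derivation of the $(-1)^{r-1}$ from the cohomological degree of $\mathcal{I}_\alpha[-1]$ is in fact a bit more transparent than the paper's bare assertion of the parity rule.
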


\begin{proof} 
First, we observe that if $|R_{\alpha}|$ is odd, then $[\tau\mathcal{P}_{\alpha}]$ is a positive sum of simples in $\mathcal{K}_0$; if $|R_{\alpha}|$ is even, then $[\tau\mathcal{P}_{\alpha}]$ is a negative sum of simples in $\mathcal{K}_0$.

Let us state this fact in terms of configurations as follows. We will work with sign configurations which are just configurations with a sign attached. Let $D$ be the corresponding configuration to $\alpha$ with a sign attached. In this case, $|R_{\alpha}|$ is the number of positive entries in $D$. We define the action of $\{1\}$ on signed configurations as follows: if $|R_{\alpha}|$ is odd, then $D\{1\}$ will have the same sign; if $|R_{\alpha}|$ is even, then $D\{1\}$ will have the opposite sign. We will write this fact in an explicit way as follows. Let $D=\{a_1<\cdots<a_m\}$ for $\alpha=(a_1,\cdots,a_m)$.

We first define $r:D\to \{1,-1\}$ by sending $a_i\mapsto r(a_i)$ where 
\[r(a_i)=\begin{cases} -1 &\text{ if \ \ \ \ \ } 0<a_i\leq n\\
1 &\text{ if } -m\leq a_i\leq 0.
\end{cases}\]

Then $\si(D\{1\})$ is as follows:

\[\si(D\{1\})=(-1)\left(\prod^m_{j=1}r(a_j)\right)\si(D)\]

We know that each $a_i$ will be non-negative $n$ times in the full orbit of $D$. So, we have \[\displaystyle\prod^{m+n}_{j=0}r(a_i-j)=(-1)^n\]

Now, let us determine the sign of $D\{m+n+1\}$.\\

$\si(D\{m+n+1\})$
\begin{align*}
&=\left((-1)\prod^{m}_{j=1}r(a_j-m-n)\right)\si(D\{m+n\})\\
&=\left((-1)\prod^{m}_{j=1}r(a_j-m-n)\right)\left((-1)\prod^{m}_{j=1}r(a_j-m-n-1)\right)\cdots\left((-1)\prod^{m}_{j=1}r(a_j)\right)\si(D)\\
&=(-1)^{m+n+1}\left(\prod^{m+n}_{j=0}r(a_1-j)\right)\cdots\left(\prod^{m+n}_{j=0}r(a_m-j)\right)\si(D)\\
&=(-1)^{m+n+1}(-1)^{nm}\si(D)=(-1)^{(m+1)(n+1)}\si(D)
\end{align*}
From this, we see that when $m$ and $n$ are both even, the order is $2(m+n+1)$. Otherwise, it is $m+n+1$.
\end{proof}

\begin{proposition}\label{generate} The elements $[\mathcal{P}_{\alpha}]$, $\alpha$ is a partition, generates the Grothendieck group $\mathcal{K}_0$ of the incidence algebra $\mathcal{A}$ of the poset $\mathsf{J(P_{m,n})}$.
\end{proposition}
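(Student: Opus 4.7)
The plan is to argue by induction on partitions $\beta$ (the vertices of $\mathsf{J(P_{m,n})}$), ordered by any linear extension of the componentwise partial order $\leq$, and to show at each step that the simple class $[S_\beta]$ lies in the $\mathbb{Z}$-span of $\{[\mathcal{P}_\alpha]\}_{\alpha\in E_L}$. Since $\{[S_\beta]\}$ is a $\mathbb{Z}$-basis of $\mathcal{K}_0(\mathcal{A})$, this will suffice.

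For each partition $\beta=(b_1,\ldots,b_m)$, I would choose a canonical enhancement $\alpha_\beta\in E_L$ by marking every occurrence of $0$ in $\beta$ as fixed (so all of them contribute to $\alpha_0$) and leaving every occurrence of $n$ unfixed. Because all the $0$'s are fixed, this enhancement automatically satisfies $\lambda_1\neq 0$, so $\alpha_\beta\in E_L$; and clearly $\forget(\alpha_\beta)=\beta$. Hence the recipe is defined for every partition.

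The base case is $\beta=(0,\ldots,0)$: here $\alpha_\beta=(0^m||)$ has $R_{\alpha_\beta}=\emptyset$, so $\mathcal{P}_{\alpha_\beta}$ is just the stalk complex of $P_{(0^m)}=S_{(0^m)}$, and $[\mathcal{P}_{\alpha_\beta}]=[S_{(0^m)}]$ directly. For the inductive step with $\beta\neq(0^m)$, Proposition~\ref{proj-int} gives
\[
[\mathcal{P}_{\alpha_\beta}] \;=\; [[f(\alpha_\beta),\beta]] \;=\; [S_\beta] \;+\; \sum_{f(\alpha_\beta)\leq\gamma<\beta}[S_\gamma],
\]
and every $\gamma$ appearing in the sum is strictly smaller than $\beta$ componentwise. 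By the inductive hypothesis each such $[S_\gamma]$ is already in the span of the $[\mathcal{P}_\alpha]$'s, so rearranging expresses $[S_\beta]$ as a $\mathbb{Z}$-linear combination of them as well, which closes the induction.

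I do not anticipate a serious obstacle. The only points needing explicit verification are that the canonical enhancement $\alpha_\beta$ really lies in $E_L$ and has $\forget(\alpha_\beta)=\beta$ (both immediate from the definitions), and that in the degenerate base case the complex $\mathcal{P}_{\alpha_\beta}$ reduces to the single projective $P_{(0^m)}$, whose Grothendieck class is the simple at the minimum vertex. Once these are in hand, the induction is a direct application of Proposition~\ref{proj-int}.
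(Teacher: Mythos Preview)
Your proof is correct. Both you and the paper argue by an upper-triangularity induction on partitions, but you organise it differently: the paper works in the projective basis and uses the Euler-characteristic expansion $[P_\alpha]=[\mathcal{P}_\alpha]-\sum_{J\neq\emptyset}(-1)^{|J|}[P_{\alpha-\delta_J}]$ to reduce $[P_\alpha]$ to lexicographically smaller projectives, while you work in the simple basis and use Proposition~\ref{proj-int} to write $[\mathcal{P}_{\alpha_\beta}]=[S_\beta]+\sum_{\gamma<\beta}[S_\gamma]$, reducing $[S_\beta]$ to componentwise-smaller simples. Your route is slightly more direct, since the interval description of the homology immediately gives a unitriangular relation with the simple basis; the paper's route has the advantage of not needing to pick a canonical enhancement for each vertex and of using only the definition of $\mathcal{P}_\alpha$ (the alternating sum of projectives) rather than the full homology computation. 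Either way the content is the same triangularity, and your argument goes through without issue.
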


\begin{proof} The Grothendieck group $\mathcal{K}_0$ is generated by all the isomorphism classes of indecomposable projective modules $[P_{\alpha}]$, $\alpha\in \mathsf{J(P_{m,n})}$. Now, we will think of $\alpha$ as an enhanced partition with the first bar is placed after $0$'s and the second bar at the very end. Let us define $L_{x}=[[f(x),x]]$ where $x$ is an enhanced partition. We will show that each $[P_{\alpha}]$ can be written as a linear combination of elements of the form $[L_{x}]$. We will proceed by strong induction on partitions ordered lexicographically. The base case is $\alpha=(0,\cdots,0)$. Then $[P_{\alpha}]=[L_\alpha]$, and we are done.

Recall that we get the element $[\mathcal{P}_{\alpha}]$ in $\mathcal{K}_0$ by taking the Euler characteristic of the projective resolution $\mathcal{P}_{\alpha}$. 

Recall also the notation from Subsection~\ref{s-proj-res}. Let us write each partition $\alpha-\delta_J$ where $J\subseteq R_{\alpha}$ and $J\neq \emptyset$. Notice that each $\alpha-\delta_J$ comes before $\alpha$ in the lexicographical order. 

Now, we write $[P_{\alpha}]=[\mathcal{P}_{\alpha}]+[\displaystyle\bigoplus_{\substack{J\subseteq R_{\alpha},\\|J|=1}} P_{\alpha-\delta_J}]-[\bigoplus_{\substack{J\subseteq R_{\alpha},\\|J|=2}} P_{\alpha-\delta_J}]+\cdots+(-1)^{|J|} [P_{\alpha-\delta_{R_{\alpha}}}]$. Therefore, by the induction hypothesis, each $[P_{\alpha-\delta_J}]$ can be written as a linear combination of elements of the form $[L_{x}]$. So, we have $$\displaystyle\sum_{\substack{J\neq\emptyset}}(-1)^{(|J|+1)}[ P_{\alpha-\delta_J}]=\sum_{x}a_{x} [L_{x}]$$ And we know that $[\mathcal{P}_{\alpha}]=[L_{\alpha}]$. Now, we have the desired result: $$[P_{\alpha}]=[L_{\alpha}]+\sum_{x}a_{x} [L_{x}]$$ 
\end{proof}


\section{A general framework : Cominuscule Posets}\label{cominuscule-section} In this section, we will investigate the action of $\tau$ for the poset of order ideals of cominuscule posets.

\subsection{Root Posets and Cominuscule Posets} 

Let $\Phi$ be a finite root system, and $\Delta \subset \Phi$ be the set of simple roots. Let $\Phi^+ \subset \Phi$ be the subset that consists of the positive roots, which are the nonnegative linear combinations of elements in $\Delta$. One can define a partial order on $\Phi^+$ naturally as follows: For $\sigma,\ \sigma'\in \Phi^+$, we say that $\sigma'\prec\sigma$ if and only if $\sigma-\sigma'$ is a sum of positive roots. This poset is called the poset of positive roots of the root system $\Phi$. 

The height of a root $\sigma$ is defined as the sum of the coefficients in the expression of $\sigma$ as a linear combination of simple roots. The poset $\Phi^{+}$ has always a highest root, say $\eta$. For a detailed discussion of the subject, see \cite[Chapter 3]{Humphreys78}.

\begin{example}\label{rootposet} The following is the Hasse diagram of the poset of positive roots of $A_3$. The set of simple roots is $\Delta=\{\sigma_1,\sigma_2,\sigma_3\}$ and $\eta=\sigma_1+\sigma_2+\sigma_3$ is the highest root.
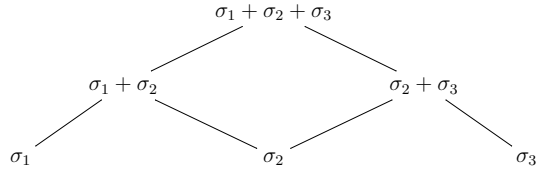
\begin{figure}[H]
\scalebox{0.7}{
$$    \xymatrix{
& & & {\sigma_1+\sigma_2+\sigma_3} \ar@{-}[dr] & &  & & \\
     & & {\sigma_1+\sigma_2} \ar@{-}[dr] \ar@{-}[ur] & & {\sigma_2+\sigma_3} \ar@{-}[dr]  & & & \\
  &{\sigma_1} \ar@{-}[ur]&  & {\sigma_2} \ar@{-}[ur] & & {\sigma_3}  &  & &  }$$}
\caption{Root poset of $A_3$}
\end{figure}
\end{example}

\begin{definition}~\label{cominuscule} Let $\eta$ be the highest root. A simple root $\sigma$ is called a cominuscule root if the multiplicity of $\sigma$ in the simple root expansion of $\eta$ is $1$.
\end{definition} 

\begin{definition} Given a root system $\Phi$, an interval $\mathsf{C}$ of the form $[\sigma,\eta]$ in the root poset for $\Phi^+$ where $\sigma$ is cominuscule root and $\eta$ is the highest root is called a \emph{cominuscule poset}. 
\end{definition}

Cominuscule posets appear in representation theory of Lie groups, Schubert calculus, and combinatorics. For more details about root systems see \cite[Chapter 3]{Humphreys78}, for cominuscule posets \cite[Section 9.0]{BL00}, \cite{RS13} and \cite{TY09}.

\begin{example} In Example \ref{rootposet}, all simple roots are cominuscule roots. The following are cominuscule posets for the simple roots $\sigma_1$ and $\sigma_2$, respectively.

\scalebox{0.7}{
$$ \qquad \qquad  \xymatrix{
  &&\sigma_1+\sigma_2+\sigma_3 \ar@{-}[dl]\\
  &\sigma_1+\sigma_2\ar@{-}[dl]&\\
 \sigma_1 &&  
 }
 \qquad \qquad
  \xymatrix{
  & \ar@{-}[dr]\ar@{-}[dl]\sigma_1+\sigma_2+\sigma_3 &\\
  \sigma_1+\sigma_2 \ar@{-}[dr]& &\ar@{-}[dl]\sigma_2+\sigma_3\\
  &\sigma_2 & 
 }$$}

\end{example}

The following shows an illustration of the possible shapes of cominuscule posets except the exceptional cases. The exceptional cominuscule posets will be discussed at the end of this section.
\begin{figure}[H]
\includegraphics[width=0.63\textwidth]{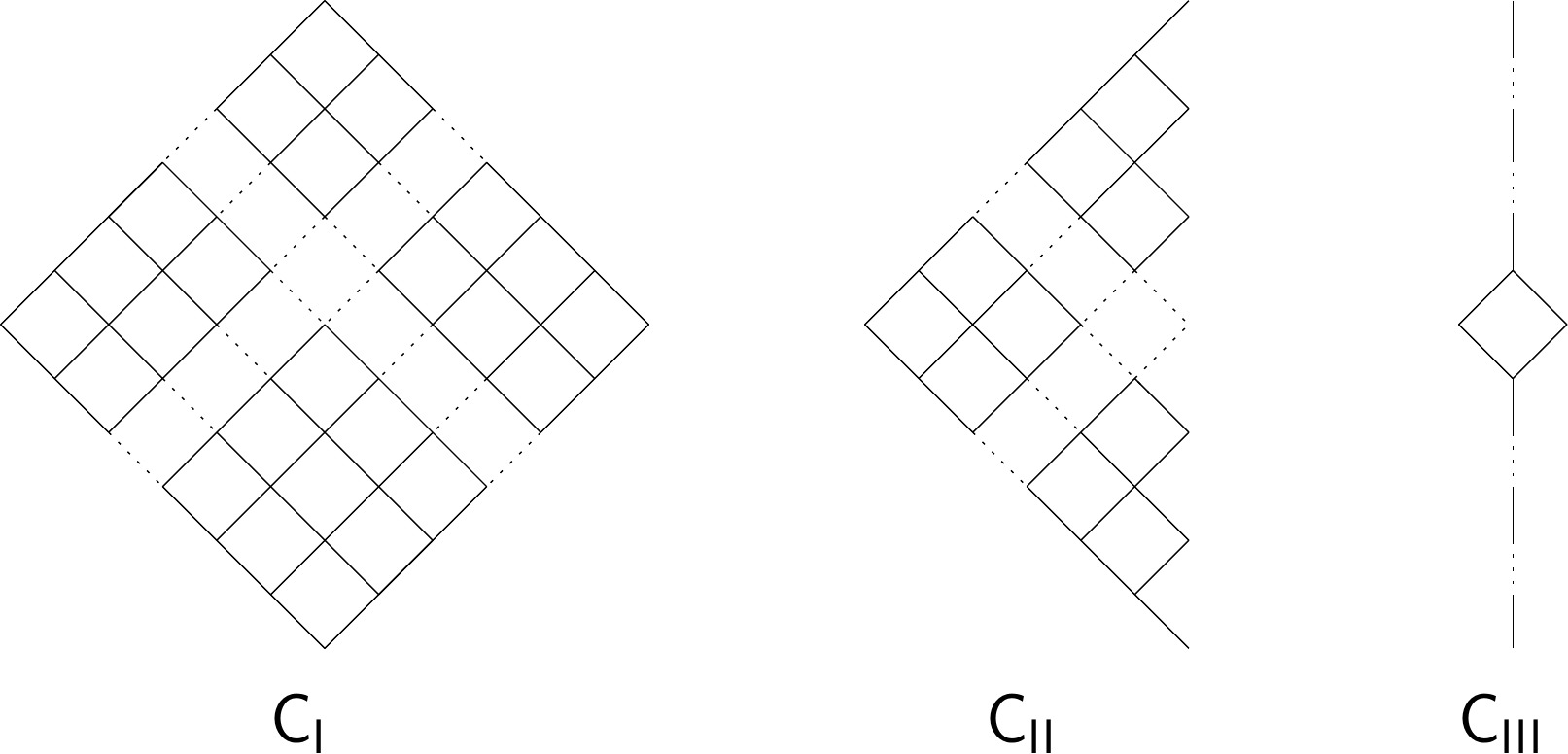}
\caption{Cominuscule posets of type $\mathsf{I}$, $\mathsf{II}$, $\mathsf{III}$}\label{set-cominuscule-posets}
\end{figure}

\begin{conjecture}~\label{conj}
Auslander-Reiten translation $\tau$ has finite order on the Grothendieck group of the bounded derived category for the incidence algebra of the poset $\mathsf{J(C)}$. Specifically, $\tau^{2(h+1)}=id$ where $h$ is the Coxeter number for the relevant root systems ($A$, $B$, $C$, $D$, $E_6$ or $E_7$). 
\end{conjecture}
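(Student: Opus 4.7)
My plan is to imitate, for each cominuscule type, the proof of Theorem~\ref{tau-on-grid}. Throughout, the goal is to (i) produce a spanning family $\{[\mathcal{P}_\alpha]\}$ of $\mathcal{K}_0$ indexed by some combinatorial set $E^{\mathsf{C}}_L$, (ii) show that $\tau$ acts on this family by $[\mathcal{P}_\alpha]\mapsto\pm[\mathcal{P}_{\tilde f(\alpha)}]$ for a combinatorially defined successor map $\tilde f$, and (iii) identify $\tilde f$, via a bijection $\psi$, with the generator of a cyclic group of order $h+1$ acting on a suitable model $\mathcal{D}^{\mathsf{C}}$, exactly as in Section~3.

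For step (iii) a natural candidate for $\mathcal{D}^{\mathsf{C}}$ is an adaptation of the configurations of Section~3: for the grid, $\mathcal{D}_{m,n}$ consists of $m$-element subsets of $\mathbb{Z}/(m+n+1)$ with the shift generating the required action. For the remaining infinite families of cominuscule posets, one should expect an analogous description in terms of subsets of $\mathbb{Z}/(h+1)$, where the extra bead relative to $\mathbb{Z}/h$ corresponds to the enhancement (the fixed $0$'s and the fixed $n^{*}$'s) of the partition model. The cyclic sieving phenomena attached to minimal coset representatives of the Weyl group $W/W_{P}$ supply candidate models that are already tied to the Coxeter number $h$ of the relevant root system, and promotion-type actions on $\mathsf{J(C)}$ from the combinatorics of cominuscule posets provide the concrete cyclic generators.

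The main obstacle is step (i): for each type, producing the right family of projective resolutions whose Euler classes span $\mathcal{K}_0$ and are supported on intervals $[[f(\alpha),\alpha]]$. In the grid case this rests on the fact that the distinct non-fixed entries $\lambda_i$ of $\alpha$ index an antichain in $\mathsf{P_{m,n}}$, and the Euler class of $\mathcal{P}_\alpha$ is precisely the interval class $[[f(\alpha),\alpha]]$ by Proposition~\ref{proj-int}. To generalize, one has to select, for every $\alpha$, a distinguished antichain in $\mathsf{C}$ below $\alpha$ along which to form the resolution, and then re-prove the face-complex exactness of Propositions~\ref{pr} and~\ref{ir}, together with the interval identifications of Propositions~\ref{proj-int} and~\ref{inj-int}. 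This is where the type-specific combinatorics enters and does not follow formally from the grid case. For the exceptional cominuscule posets attached to $E_6$ and $E_7$ the lattice $\mathsf{J(C)}$ is finite of manageable size, so $\tau^{h+1}=\pm\,\mathrm{id}$ can in principle be verified by direct matrix computation, bypassing the combinatorial machinery entirely.

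Finally, to decide whether $\tau^{h+1}=\mathrm{id}$ or only $\tau^{2(h+1)}=\mathrm{id}$, one tracks the sign arising from the parity of $|R_\alpha|$ along a full orbit, exactly as in Proposition~\ref{sign}. The weaker statement $\tau^{2(h+1)}=\mathrm{id}$ asserted in the conjecture follows from the $\pm$ refinement regardless of type; the sharp order $h+1$ should hold whenever the cumulative parity along the orbit is even, and a type-by-type analysis parallel to the parity of $m$ and $n$ in the grid case should determine when this occurs.
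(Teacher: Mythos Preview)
Your proposal is a research programme, not a proof, and the statement you are trying to prove is in fact labelled a \emph{Conjecture} in the paper and is \emph{not} proved there in full generality. The paper establishes the conjecture only for types $\mathsf{I}$, $\mathsf{III}$, $E_6$ and $E_7$; the type $\mathsf{II}$ case (cominuscule posets arising from $C_n$ and from the branch nodes $\sigma_{n-1},\sigma_n$ in $D_n$) is explicitly left open. So any plan that claims to treat all cominuscule types uniformly is attempting strictly more than the paper achieves, and the ``main obstacle'' you flag in step~(i) --- producing, for each type, the right family of resolutions supported on intervals --- is precisely the obstruction the paper does not overcome for type~$\mathsf{II}$.

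Where the paper does succeed beyond the grid, it does \emph{not} proceed by adapting the combinatorial machinery of Sections~2--4 as you propose. For type~$\mathsf{III}$ (the $D_n$ cominuscule poset over $\sigma_1$), the paper exhibits a tilting module over the path algebra of Dynkin type $D_{2n}$ whose endomorphism algebra is the incidence algebra of $\mathsf{J(C_{III})}$, whence a derived equivalence to $\Bbbk D_{2n}$; the fractional Calabi--Yau property (and hence the periodicity of $\tau$) then comes for free from the hereditary side. There is no analogue of $E_L$, $\tilde f$, or configurations in that argument. For $E_6$ and $E_7$ the paper simply reports a SageMath verification, which matches what you suggest for the exceptional cases. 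Thus the only type where your outline and the paper's proof genuinely align is type~$\mathsf{I}$, which is exactly Theorem~\ref{tau-on-grid} and needs no further work.
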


In this paper, we prove that the Conjecture~\ref{conj} is true for cominuscule posets $\mathsf{J(C_I)}$, $\mathsf{J(C_{III})}$, $\mathsf{J(C_{E_6})}$, $\mathsf{J(C_{E_7})}$.

\begin{remark} If the same cominuscule poset comes from two different root systems, the resulting orders of the Coxeter transformation agree. So, Conjecture~\ref{conj} is consistent.
\end{remark}

We will proceed with a case by case analysis and prove some results.

\subsection{Type $A$ case}

All simple roots in the root poset of $A_n$ are cominuscule roots. So, they all give rise to cominuscule posets. 

The cominuscule poset over any simple root in $A_n$ is of type $\mathsf{I}$ in Figure~\ref{set-cominuscule-posets}. In other words, they are the grid posets of size $(n+1)$. We have the result that $\tau$ acts finitely on the poset of order ideals of a grid poset in Theorem~\ref{tau-on-grid}. Recall that the Coxeter number $h$ is $n+1$ in type $A_n$. Therefore, the Conjecture~\ref{conj} holds for the type $A_n$, i.e. $\tau^{2(h+1)}=id$ in $\mathcal{K}_0(\mathcal{D}^b(\mathcal{A}))$ for the incidence algebra $\mathcal{A}$ of the poset of order ideals $\mathsf{J(C_I)}$. 


\begin{example} The following figure is an illustration of root poset of $A_n$. The shaded area shows the cominuscule poset $\mathsf{C_I}$ over the simple root $\sigma_k$.

\begin{figure}[H]
\includegraphics[width=0.54\textwidth]{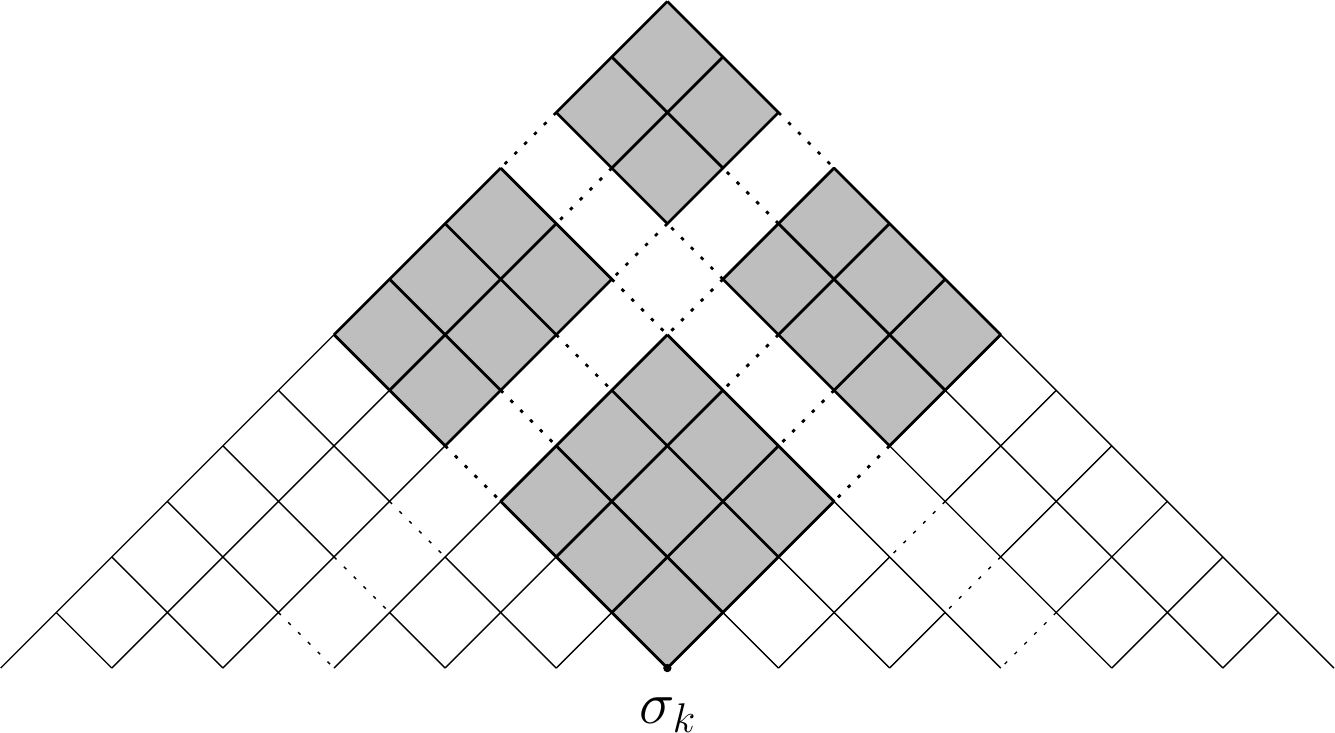}
\caption{The root poset of $A_n$ and the cominuscule poset $\mathsf{C_I}$ over the simple root $\sigma_k$.}
\end{figure}
\end{example}

\subsection{Type $B$ case} In type $B_n$, the only simple root which is a cominuscule root is $\sigma_1$ and the cominuscule poset is the grid poset $\mathsf{P_{1,2n-1}}$. In this case, the order of $\tau$ is $2n+1$ by Proposition~\ref{sign}. Therefore, we have the desired result, i.e. $\tau^{2(1+2n-1+1)}=\tau^{2(h+1)}=id$ where the Coxeter number $h$ is $2n$. 

\subsection{Type $C$ case} In type $C_n$, the only simple root which gives rise to the cominuscule poset is $\sigma_n$ and the cominuscule poset is the type of $\mathsf{II}$ in Figure~\ref{set-cominuscule-posets}. 

\begin{example} Here we illustrate the root poset of $C_n$ and the shaded area shows the cominuscule poset over the simple root $\sigma_n$.
\begin{figure}[H]
\includegraphics[width=0.3\textwidth]{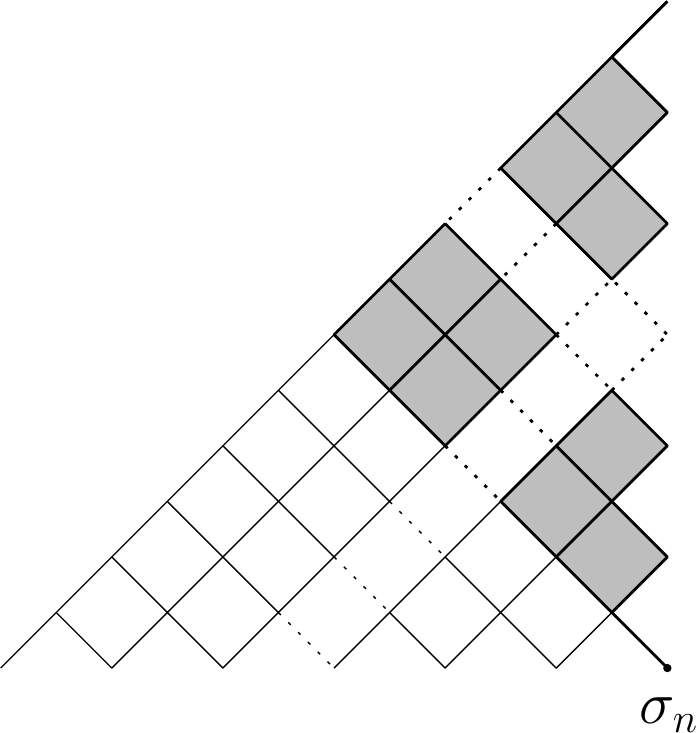}
\caption{The root poset of $C_n$ and the cominuscule poset $\mathsf{C_{II}}$ over the simple root $\sigma_n$.}
\end{figure}
This case is still open.
\end{example}

\subsection{The type D case} In type $D_n$, $n>3$, there are three simple roots $\sigma_1$, $\sigma_{n-1}$, and $\sigma_n$ which give rise to cominuscule posets. The simple roots $\sigma_{n-1}$ and $\sigma_n$ give rise to cominuscule posets of type $\mathsf{II}$ which is the same as the cominuscule poset for $C_n$. 

The simple root $\sigma_1$ gives rise to the cominuscule poset of type $\mathsf{III}$. The cominuscule poset $\mathsf{C_{III}}$ and the poset of order ideals $\mathsf{J(C_{III})}$ is shown as follows:

\begin{figure}[H]\label{dan}
$$\scalebox{0.5}{\xymatrix{
&2n-2\ar@{-}[d]&\\
&2n-3\ar@{-}[d]&\\
&\vdots&\\
&n+2\ar@{-}[d]&\\
&n+1\ar@{-}[dl]\ar@{-}[dr]&\\
n-1\ar@{-}[dr]&&n\ar@{-}[dl]\\
&n-2\ar@{-}[d]&\\
&n-3\ar@{-}[d]&\\
&\vdots&\\
&1\ar@{-}[u]&
}}\qquad \qquad \qquad
\scalebox{0.5}{\xymatrix{
&2n\ar@{-}[d]&\\
&2n-1\ar@{-}[d]&\\
&\vdots&\\
&n+3\ar@{-}[d]&\\
&n+2\ar@{-}[dl]\ar@{-}[dr]&\\
n\ar@{-}[dr]&&n+1\ar@{-}[dl]\\
&n-1\ar@{-}[d]&\\
&n-2\ar@{-}[d]&\\
&\vdots&\\
&1\ar@{-}[u]&
}}$$
\caption{The cominuscule poset $\mathsf{C_{III}}$ and the order ideal poset $\mathsf{J(C_{III})}$ for the simple root $\sigma_1$ in the root system for $D_n$.}
\end{figure}
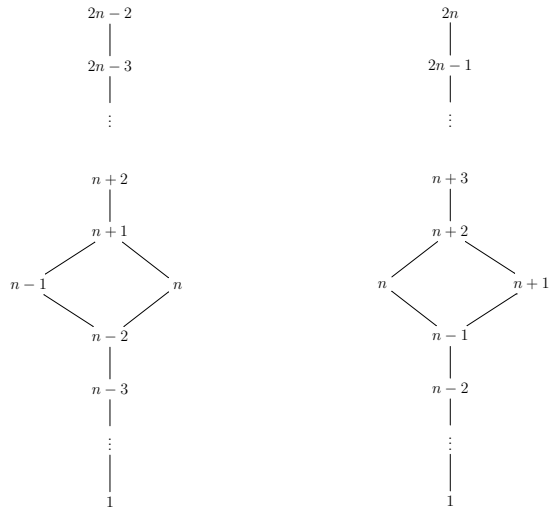

\begin{proposition}~\label{sefi} Let $\mathcal{A}_{(D_n,\sigma_1)}$ be the incidence algebra of the poset $\mathsf{J(C_{III})}$ of size $2n$ for the corresponding root system $D_n$. Then $\mathcal{D}^b(\mo\mathcal{A}_{(D_n,\sigma_1)})$ is fractionally Calabi-Yau.
\end{proposition}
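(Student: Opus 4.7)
The plan is to prove the fractional Calabi-Yau property by reducing $\mathcal{A}_{(D_n,\sigma_1)}$ to a hereditary Dynkin algebra via a derived equivalence. Examining the Hasse diagram of $\mathsf{J(C_{III})}$, one sees a ``dumbbell'' shape: two linear chains joined by a single commutativity square, with apex $n+2$ and pit $n-1$. The incidence algebra is therefore the path algebra of a linear quiver with a single diamond, subject to exactly one commutativity relation, and this simple structure strongly suggests that the algebra is piecewise hereditary, i.e.\ derived equivalent to the path algebra of a finite acyclic quiver.

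First I would exhibit an explicit tilting complex $T$ in $\mathcal{D}^b(\mo\mathcal{A}_{(D_n,\sigma_1)})$ designed to resolve the commutativity relation. A natural candidate keeps all indecomposable projectives away from the diamond but replaces the pair $P_n, P_{n+1}$ by two-term complexes that split the commutativity square. The computation of $\en(T)$ then reduces to a local analysis around the diamond; provided $\en(T)$ turns out to be the path algebra of a Dynkin quiver, most likely of type $D_{2n}$ from the element count, Happel's theorem on hereditary representation-finite algebras yields the fractional Calabi-Yau property for $\mathcal{D}^b(\mo \en(T))$, and hence for $\mathcal{D}^b(\mo\mathcal{A}_{(D_n,\sigma_1)})$ by derived equivalence.

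The main obstacle is constructing the tilting complex explicitly and confirming the Dynkin type of $\en(T)$. If a direct tilting argument proves unwieldy, an attractive alternative is Ladkani's flip-flop technique~\cite{L08}, mentioned in Remark~\ref{referee}, which produces derived equivalences between incidence algebras of posets related by local flips. A backup plan is to find a sequence of flips carrying $\mathsf{J(C_{III})}$ either to the order ideal lattice of a grid poset, allowing one to leverage Theorem~\ref{tau-on-grid} together with an analogous derived-level strengthening, or directly to a Dynkin quiver. Either route would combine the combinatorial control over $\tau$ developed in Section~2 with a purely categorical input that upgrades the Grothendieck-group periodicity into a genuine isomorphism $\tau^N \cong [k]$ in the bounded derived category.
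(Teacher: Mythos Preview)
Your strategy is essentially the paper's: establish a derived equivalence between $\mathcal{A}_{(D_n,\sigma_1)}$ and the path algebra of type $D_{2n}$, then quote the known fractional Calabi--Yau property for Dynkin quivers. The paper, however, runs the tilting in the opposite direction: rather than building a tilting complex over the incidence algebra and hoping the endomorphism ring is hereditary, it writes down an explicit tilting \emph{module} $T$ over the hereditary algebra $\Bbbk D_{2n}$ (a direct sum of projectives and of indecomposables $M_i$ with dimension vectors $(1,\ldots,1,2,\ldots,2,1)$) and checks $\en(T)\cong \mathcal{A}_{(D_n,\sigma_1)}$. Working from the hereditary side is cleaner because tilting modules over Dynkin path algebras are easy to exhibit and verify, whereas your proposed two-term complexes over the incidence algebra require checking vanishing of self-extensions and generation by hand.

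Your alternative via Ladkani's flip-flop is exactly what the paper records in Remark~\ref{referee} (note the correct reference is~\cite{Ladkani07}, not~\cite{L08}): iterating the $\mathsf{P}^1 \simeq \mathsf{P}_0$ derived equivalence slides the diamond to one end of the chain and realizes $\mathsf{J(C_{III})}$ as derived equivalent to $D_{2n}$ directly. Your second backup, reducing to a grid poset and invoking Theorem~\ref{tau-on-grid}, would not by itself give the fractional Calabi--Yau statement, since that theorem lives only at the level of $\mathcal{K}_0$; you correctly flag that a derived-level upgrade would be needed, but no such strengthening is available in the paper.
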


\begin{proof}  Let $M_0$ be the module over the algebra $D_{2n}$ with dimension vector $(1,1,\cdots,1,1)$, and $M_i$ be the module with dimension vector $(1,1,\cdots,1,2,\cdots,2,1)$ where we have the dimension $2$ appears $i$ times for $i>0$. Let us now consider the following module: $T=P_1\oplus P_2\oplus\cdots\oplus P_{n-1}\oplus P_{2n-1}\oplus P_{2n}\oplus M_0\oplus M_1\oplus\cdots\oplus M_{n-1}$ which is illustrated in Figure~\ref{dn-tilting}.

\begin{figure}[H]
\includegraphics[width=0.7\textwidth]{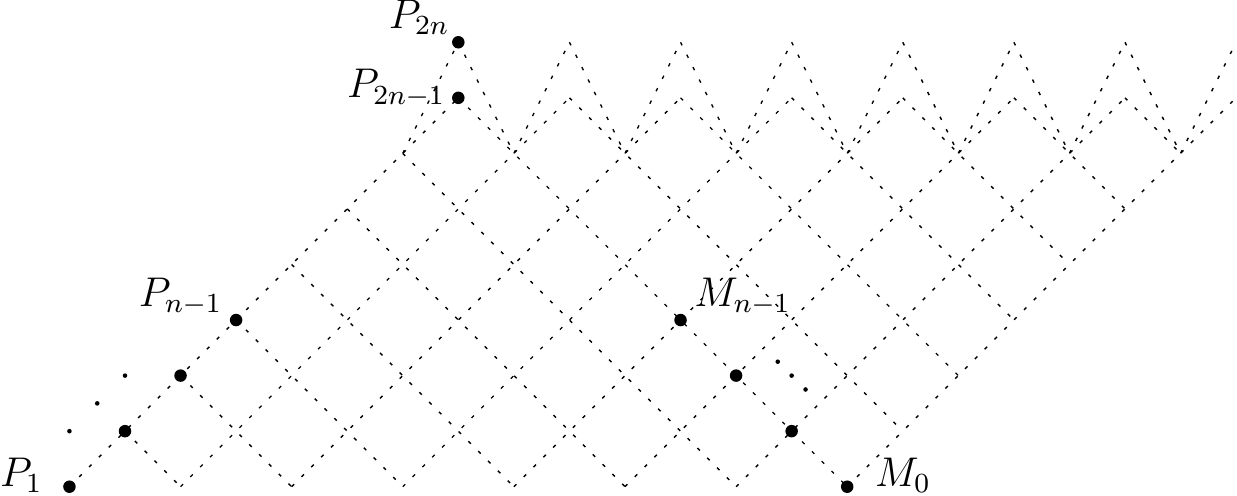}
\caption{An illustration of the module category of $D_{2n}$ and the tilting module $T$.}~\label{dn-tilting}
\end{figure}

It is not difficult to see $T$ is a tilting module. So, we consider the endomorphism algebra $\en T$ which is the algebra $\mathcal{A}_{(D_n,\sigma_1)}$. Therefore, we have that $\mathcal{D}^b(\mo\mathcal{A}_{(D_n,\sigma_1)})$ is derived equivalent to $\mathcal{D}^b(\mo D_{2n})$. We know by~\cite[Example 8.3(2)]{Keller05}, $\mathcal{D}^b(\mo D_{2n})$ is fractionally Calabi-Yau, and therefore $\mathcal{D}^b(\mo\mathcal{A}_{(D_n,\sigma_1)})$ is also fractionally Calabi-Yau. Thus, we have the desired result.
\end{proof}

\begin{remark}\label{referee} Proposition~\ref{sefi} can also be proved by using the technique of flip-flops of Ladkani~\cite[Theorem 1.1]{Ladkani07}. 
Let $\mathsf{P}$ be a finite poset, $\mathsf{P}^1$ be the poset with a unique maximum element added to $\mathsf{P}$, and $\mathsf{P}_0$ be the poset with a unique minimum element added to $\mathsf{P}$. Ladkani shows that $\mathsf{P}^1$ and $\mathsf{P}_0$ are derived equivalent. Using this fact, we can show that the poset $\mathsf{J(C_{III})}$ of size $2n$ is derived equivalent to $D_{2n}$.
\end{remark}

\begin{example} In this example, we use Ladkani's technique showing that $\mathsf{J(C_{III})}$ of size $8$ is derived equivalent to $D_{8}$.
\begin{figure}[H]
\includegraphics[width=0.7\textwidth]{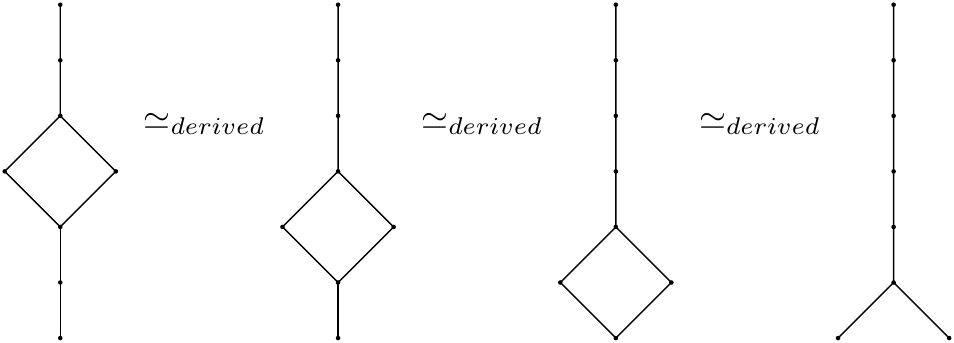}
\caption{The derived equivalent posets}
\end{figure}
\end{example}

\begin{corollary} The Auslander-Reiten translation $\tau$ on $\mathcal{K}_0(\mathcal{D}^b(\mo\mathcal{A}_{(D_n,\sigma_1)}))$ has finite order of $2(h+1)$ where $h$ is the Coxeter number for type $D_n$. 
\end{corollary}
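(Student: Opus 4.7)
The plan is to deduce the corollary directly from Proposition~\ref{sefi}. By that proposition, there is a triangulated equivalence
\[
\mathcal{D}^b(\mo\mathcal{A}_{(D_n,\sigma_1)}) \simeq \mathcal{D}^b(\mo D_{2n}),
\]
where $D_{2n}$ denotes the path algebra of any orientation of the Dynkin quiver of that type. Since the Auslander--Reiten translation $\tau$ and the shift $[1]$ are intrinsic to the triangulated structure (via the Serre functor relation $\nu \cong \tau\,[1]$, which is preserved by any triangulated equivalence of bounded derived categories of finite-dimensional algebras of finite global dimension), the induced isomorphism on $\mathcal{K}_0$ intertwines the two Coxeter transformations. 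It therefore suffices to compute the order of $\tau$ on $\mathcal{K}_0(\mathcal{D}^b(\mo D_{2n}))$.

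For Dynkin type $D_m$ with $m \geq 3$, the derived category is fractionally Calabi--Yau of dimension $\tfrac{m-2}{m-1}$ (this is the specific case of~\cite[Example 8.3(2)]{Keller05} already invoked in the proof of Proposition~\ref{sefi}); concretely, $\nu^{m-1} \cong [m-2]$. Substituting $\nu = \tau\,[1]$ and rearranging yields $\tau^{m-1} \cong [-1]$, so $\tau^{2(m-1)} \cong [-2]$. As $[1]$ acts as multiplication by $-1$ on $\mathcal{K}_0$, the endomorphism $[-2]$ acts as the identity, and consequently $\tau^{2(m-1)} = id$ on $\mathcal{K}_0(\mathcal{D}^b(\mo D_m))$.

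Specializing to $m = 2n$ gives $\tau^{4n-2} = id$ on both Grothendieck groups in question. The arithmetic matches the desired bound: the Coxeter number for type $D_n$ is $h = 2n-2$, so
\[
2(h+1) = 2(2n-1) = 4n-2,
\]
exactly the exponent produced above. There is no serious obstacle---the genuine content was dispatched in Proposition~\ref{sefi}; the only point worth spelling out is the passage from the fractional Calabi--Yau relation to the order on $\mathcal{K}_0$, where the parity of the shift exponent in $\tau^{m-1} \cong [-1]$ is precisely what forces the extra factor of $2$ relative to the naive Coxeter-number bound, bringing the answer in line with the pattern established in Theorem~\ref{tau-on-grid} and Conjecture~\ref{conj}.
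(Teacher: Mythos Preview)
Your argument is correct and follows essentially the same route as the paper: invoke the derived equivalence with $D_{2n}$ from Proposition~\ref{sefi}, then use the known periodicity of the Coxeter transformation for Dynkin type to conclude $\tau^{4n-2}=id$ and rewrite $4n-2$ as $2(h+1)$ for $h=2n-2$. You simply spell out two steps the paper leaves implicit---that the Coxeter transformation is preserved under derived equivalence via the Serre functor, and that the fractional Calabi--Yau relation $\nu^{m-1}\cong[m-2]$ yields $\tau^{m-1}\cong[-1]$ and hence $\tau^{2(m-1)}=id$ on $\mathcal{K}_0$---whereas the paper just cites $\tau^{h_{2n}}=id$ directly.
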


\begin{proof} This is an immediate consequence of Proposition~\ref{sefi}. Since $\mathsf{J(C_{III})}$ of size $2n$ is derived equivalent to $D_{2n}$, we know that $\tau^{h_{2n}}=id$ in $\mathcal{K}_0$ where $h_{2n}=2(2n-1)$ is the Coxeter number for $D_{2n}$. To be consistent with our Conjecture~\ref{conj}, we write $\tau^{2(h+1)}=id$ in $\mathcal{K}_0$ where $h=2(n-1)$ is the Coxeter number for $D_{n}$.
\end{proof}

\subsection{Exceptional Cases} There are two cominuscule roots which give rise to the same cominuscule poset for type $E_6$ and there is only one cominuscule root which gives a cominuscule poset for type $E_7$.

\begin{figure}[H]
\includegraphics[width=0.44\textwidth]{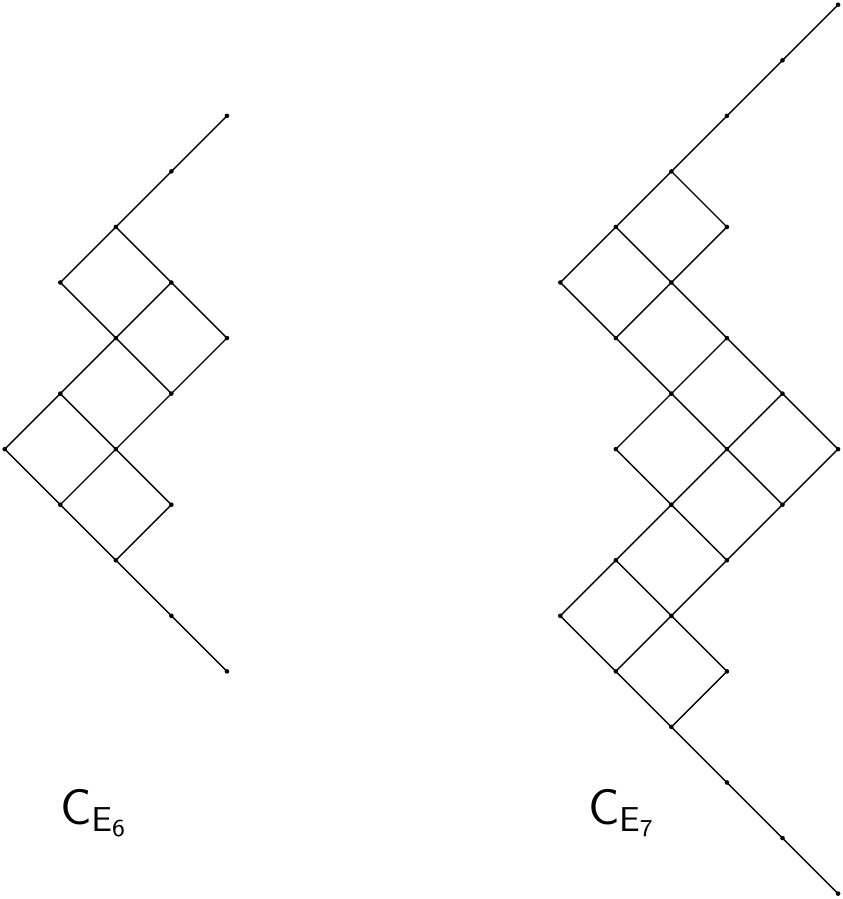}
\caption{The cominuscule poset $\mathsf{C_{E_6}}$ and the cominuscule poset $\mathsf{C_{E_7}}$}
\end{figure}

We checked that the result also holds for these two exceptional cases by using the mathematical software \emph{SageMath} \cite{sagemath}.

\bibliographystyle{plain}

\begin{thebibliography}{10}

\bibitem{ASS06}
I.~Assem, D.~Simson, and A.~Skowro\'nski.
\newblock {\em Elements of the representation theory of associative algebras.
  {V}ol. 1}, volume~65 of {\em London Mathematical Society Student Texts}.
\newblock Cambridge University Press, Cambridge, 2006.

\bibitem{BL00}
S.~Billey and V.~Lakshmibai.
\newblock {\em Singular loci of {S}chubert varieties}, volume 182 of {\em
  Progress in Mathematics}.
\newblock Birkh\"auser Boston, Inc., Boston, MA, 2000.

\bibitem{Chapoton07}
F.~Chapoton.
\newblock On the {C}oxeter transformations for {T}amari posets.
\newblock {\em Canad. Math. Bull.}, 50(2):182--190, 2007.

\bibitem{DPP17}
K.~Diveris, M.~Purin, and P.~Webb.
\newblock The bounded derived category of a poset.
\newblock 2017.
\newblock arXiv:1709.03227.

\bibitem{Green13}
R.~M. Green.
\newblock {\em Combinatorics of minuscule representations}, volume 199 of {\em
  Cambridge Tracts in Mathematics}.
\newblock Cambridge University Press, Cambridge, 2013.

\bibitem{Happel87}
D.~Happel.
\newblock On the derived category of a finite-dimensional algebra.
\newblock {\em Comment. Math. Helv.}, 62(3):339--389, 1987.

\bibitem{Happel88}
D.~Happel.
\newblock {\em Triangulated categories in the representation theory of
  finite-dimensional algebras}, volume 119 of {\em London Mathematical Society
  Lecture Note Series}.
\newblock Cambridge University Press, Cambridge, 1988.

\bibitem{Hatcher02}
A.~Hatcher.
\newblock {\em Algebraic topology}.
\newblock Cambridge University Press, Cambridge, 2002.

\bibitem{Humphreys78}
J.~E. Humphreys.
\newblock {\em Introduction to {L}ie algebras and representation theory},
  volume~9 of {\em Graduate Texts in Mathematics}.
\newblock Springer-Verlag, New York-Berlin, 1978.
\newblock Second printing, revised.

\bibitem{Keller05}
B.~Keller.
\newblock On triangulated orbit categories.
\newblock {\em Doc. Math.}, 10:551--581, 2005.

\bibitem{Keller08}
B.~Keller.
\newblock Calabi-{Y}au triangulated categories.
\newblock In {\em Trends in representation theory of algebras and related
  topics}, EMS Ser. Congr. Rep., pages 467--489. Eur. Math. Soc., Z\"urich,
  2008.

\bibitem{KLM13}
D.~Kussin, H.~Lenzing, and H.~Meltzer.
\newblock Triangle singularities, {ADE}-chains, and weighted projective lines.
\newblock {\em Adv. Math.}, 237:194--251, 2013.

\bibitem{Ladkani07}
S.~Ladkani.
\newblock Universal derived equivalences of posets.
\newblock 2007.
\newblock arXiv:0705.0946.

\bibitem{L08}
S.~Ladkani.
\newblock On the periodicity of {C}oxeter transformations and the
  non-negativity of their {E}uler forms.
\newblock {\em Linear Algebra Appl.}, 428(4):742--753, 2008.

\bibitem{RS13}
D.~B. Rush and X.~Shi.
\newblock On orbits of order ideals of minuscule posets.
\newblock {\em J. Algebraic Combin.}, 37(3):545--569, 2013.

\bibitem{Stanley12}
R.~P. Stanley.
\newblock {\em Enumerative combinatorics. {V}olume 1}, volume~49 of {\em
  Cambridge Studies in Advanced Mathematics}.
\newblock Cambridge University Press, Cambridge, second edition, 2012.

\bibitem{sagemath}
W.\thinspace{}A. Stein et~al.
\newblock {\em {S}ageMath, the {S}age {M}athematics {S}oftware {S}ystem
  ({V}ersion 7.5.1)}, 2005-2017.
\newblock {\tt
  http://doc.sagemath.org/html/en/reference/combinat/sage/combinat/posets/posets.html}.

\bibitem{TY09}
H.~Thomas and A.~Yong.
\newblock A combinatorial rule for (co)minuscule {S}chubert calculus.
\newblock {\em Adv. Math.}, 222(2):596--620, 2009.

\end{thebibliography}

\end{document}